\documentclass[11pt,reqno]{amsart}
\usepackage{amssymb,mathrsfs,graphicx,subfigure, enumerate}
\usepackage{amsmath,amsfonts,amssymb,amscd,amsthm,bbm}
\usepackage{extpfeil}
\usepackage{graphicx,colortbl}
\usepackage{float}
\usepackage{epsfig}
\usepackage{caption}
\usepackage{tikz}
\usepackage{tikz-cd}
\usepackage{bm}
\graphicspath{{Figure/}}\usepackage[margin=1in]{geometry}

\makeatletter
\@namedef{subjclassname@2020}{\textup{2020} Mathematics Subject Classification}
\makeatother

\title[Hydrodynamic limits of CSH system]
{Quantitative Hydrodynamic Limit of the Chern--Simons--Higgs System}

\author[Kim]{Jeongho Kim}
\address[Jeongho Kim]{\newline Department of Applied Mathematics, Kyung Hee University\newline 1732 Deogyeong-daero, Giheung-gu, Yongin-si, Gyeonggi-do 17104, Republic of Korea}
\email{jeonghokim@khu.ac.kr}

\author[Moon]{Bora Moon}
\address[Bora Moon]{\newline Department of Mathematics, Yonsei University, Seoul 03722, Republic of Korea}
\email{boramoon@yonsei.ac.kr}

\begin{document}
	\newtheorem{theorem}{Theorem}[section]
	\newtheorem{lemma}{Lemma}[section]
	\newtheorem{corollary}{Corollary}[section]
	\newtheorem{proposition}{Proposition}[section]

	\newtheorem{definition}{Definition}[section]
	
	\theoremstyle{remark}
	\newtheorem{remark}{Remark}[section]

	\renewcommand{\theequation}{\thesection.\arabic{equation}}
	\renewcommand{\thetheorem}{\thesection.\arabic{theorem}}
	\renewcommand{\thelemma}{\thesection.\arabic{lemma}}
	\renewcommand{\theremark}{\thesection.\arabic{remark}}
	
	\newcommand{\bbr}{\mathbb R}
	\newcommand{\bbz}{\mathbb Z}
	\newcommand{\bbn}{\mathbb N}
	\newcommand{\bbs}{\mathbb S}
	\newcommand{\bbp}{\mathbb P}
	\newcommand{\bbt}{\mathbb T}
	\newcommand{\ddiv}{\textrm{div}}
	\newcommand{\bn}{\bf n}
	\newcommand{\rr}[1]{\rho_{{#1}}}
	\newcommand{\thh}{\theta}
	\def\charf {\mbox{{\text 1}\kern-.24em {\text l}}}
	\renewcommand{\arraystretch}{1.5}
	
	\newcommand{\T}{\mathbb{T}}
	\newcommand{\N}{\mathbb{N}}
	\newcommand{\R}{\mathbb{R}}
	\newcommand{\lt}{\left}
	\newcommand{\rt}{\right}
	\newcommand{\bq}{\begin{equation}}
	\newcommand{\eq}{\end{equation}}
	\newcommand{\e}{\varepsilon}
	\newcommand{\mc}{\mathcal{C}}
	\newcommand{\pa}{\partial}
	\newcommand{\ph}{\hat{p}}
	\renewcommand{\d}{\textup{d}}
	\renewcommand{\i}{\textup{i}}
	\renewcommand{\Re}{\textup{Re}} 
	\renewcommand{\Im}{\textup{Im}}
	\newcommand{\p}{\partial}

	\newcommand{\circlednum}[1]{%
		\tikz[baseline=(char.base)]{
			\node[shape=circle,draw,inner sep=0.6pt,scale=0.8] (char) {#1};}
	}
	
	\subjclass[2020]{35Q55; 35B40} 
	
	\keywords{Chern--Simons--Higgs system; Euler--Chern--Simons system; non-relativistic limit; semi-classical limit; modulated energy}
	
	\thanks{J. Kim was supported by Samsung Science and Technology Foundation under Project Number SSTF-BA2401-01. The work of B. Moon was supported by Basic Science Research Program through the National Research Foundation of Korea (NRF) funded by the Ministry of Education (RS-2022-NR074808) and funded by the Korea government(MSIT) (RS-2024-00406821).}

\begin{abstract}
	We study the hydrodynamic limit of the Chern--Simons--Higgs system,
	a relativistic gauge field model involving the Chern--Simons interaction.
	We introduce a single scaling parameter capturing both the non-relativistic
	(infinite speed of light) and semi-classical (vanishing Planck constant) regimes.
	This unified scaling allows us to justify the simultaneous non-relativistic and semi-classical limit,
	while retaining the nontrivial influence of the Chern--Simons gauge structure.
	Using a modulated energy method, we establish quantitative convergence rates
	toward the corresponding compressible Euler--Chern--Simons system as the scaling parameter tends to zero.
\end{abstract}
	\maketitle
	
\section{Introduction}\label{sec:1}
\setcounter{equation}{0}	

Planar physics investigates phenomena confined to two spatial dimensions and exhibits distinctive behaviors that are not captured by the usual $(1+3)$-dimensional classical and quantum electrodynamics. In a $(1+2)$-dimensional spacetime setting, the Chern--Simons gauge theory provides an alternative to Maxwell theory and plays an important role in the effective description of various planar quantum phenomena. In particular, it is widely regarded as a natural framework for the fractional quantum Hall effect and the emergence of anyonic statistics \cite{EHI92}. Among the models arising from Chern--Simons gauge theory, we consider the Chern--Simons--Higgs (CSH) model, which governs the dynamics of relativistic charged scalar fields coupled to a self-consistent gauge field in $(1+2)$-dimensional Minkowski spacetime. The CSH model was originally introduced in the study of vortex solutions in abelian Chern--Simons theories~\cite{HKP90,JW90}, and has since become a basic model for investigating topological and nontopological solitons, gauge interactions, and planar quantum dynamics \cite{D95}.

	To describe the dynamics of the CSH model, we work in relativistic coordinates
	$x_\mu=(x_0,x_1,x_2)=(ct,x_1,x_2)$ endowed with the Minkowski metric
	$\mathrm{diag}(+1,-1,-1)$. The model is governed by the Lagrangian density
	\[
	\mathcal{L}_{\textup{CSH}}
	:= \frac{\kappa}{2c}\epsilon^{\rho\mu\nu}A_{\rho}F_{\mu\nu}
	+ \hbar^2 D_{\mu}\phi\,\overline{D^{\mu}\phi}
	- m^2c^2|\phi|^2
	- V(2m|\phi|^2),
	\]
	where $\phi:\mathbb{R}^{1+2}\to\mathbb{C}$ is the complex scalar field,
	$A_\mu:\mathbb{R}^{1+2}\to\mathbb{R}$ ($\mu=0,1,2$) is the gauge potential, and
	$F_{\mu\nu}:=\partial_\mu A_\nu-\partial_\nu A_\mu$ denotes the field tensor.
	The covariant derivative is defined by
	$D_\mu:=\partial_\mu-\frac{\i}{c\hbar}A_\mu$, with $\partial_0:=\frac{1}{c}\partial_t$ and $\partial_j:=\partial_{x_j}$ for $j=1,2$.
	The Levi--Civita symbol $\epsilon^{\rho\mu\nu}$ is normalized by $\epsilon^{012}=1$.
	The constants $c$, $\hbar$, $m$, and $\kappa$ denote the speed of light, Planck's constant, the particle mass, and the Chern--Simons coupling parameter, respectively. Throughout this work, we consider a power-type self-interaction potential
	$V(\rho)=\frac{1}{\gamma-1}\rho^\gamma,\gamma>1,$
	and adopt the Einstein summation convention for repeated indices: Greek indices range over $0,1,2$ and Latin indices over $1,2$.
	Then, the Euler--Lagrange equations associated with $\mathcal{L}_{\textup{CSH}}$
	lead to the CSH system
	\begin{align}
		\begin{aligned}\label{CSH}
			&\hbar^2 D_\mu D^\mu \phi + m^2 c^2 \phi
			+ 2m V'(2m|\phi|^2)\phi = 0,\\
			&\kappa\Big(\frac{1}{c}\partial_t A_1-\partial_1 A_0\Big)
			=-2\hbar\,\Im(\overline{\phi}D_2\phi),\qquad
			\kappa\Big(\frac{1}{c}\partial_t A_2-\partial_2 A_0\Big)
			=2\hbar\,\Im(\overline{\phi}D_1\phi),\\
			&\frac{\kappa}{c}\big(\partial_1A_2-\partial_2A_1\big)
			=\frac{2\hbar}{c}\,\Im(\overline{\phi}D_0\phi).
		\end{aligned}
	\end{align}

	Since its introduction, the CSH system has been extensively studied from a mathematical perspective. 
	The global well-posedness under the Coulomb gauge and mild assumptions on the self-interaction potential was first established in \cite{CC02}. 
	Subsequently, local and global well-posedness for low-regularity solutions under the Coulomb, Lorenz, and temporal gauge conditions were obtained in \cite{B09,H07,H11,HO16,ST13,Y11}. 
	Beyond Cauchy problems, vortex solutions and their properties were investigated in \cite{CY95,Y01}, and non-relativistic limits toward the Chern--Simons--Schr\"odinger (CSS) system were analyzed in \cite{CH09,HS07,HM26}.
	
	More broadly, asymptotic limits of relativistic and quantum systems have been studied in a wide range of contexts. 
	Non-relativistic limits for relativistic equations such as the Klein--Gordon and Dirac equations have a long history; see, for instance, \cite{S79}. 
	For gauged quantum models, non-relativistic limits of Maxwell-gauged systems toward Schr\"odinger-type models have been studied in \cite{BMS04,JS21,MN03}. 
	Independently, semi-classical limits of Schr\"odinger-type equations and Schr\"odinger--Poisson systems have been studied; see, for example, \cite{AC07,BMP01,JW03,LL03,P02}, as well as the review article \cite{JMS11}. 
	More recently, semi-classical limits for the Chern--Simons--Schr\"odinger and Maxwell--Schr\"odinger systems have been established in \cite{KM22,KM24}. 
	While these two limiting regimes have been widely studied separately, a simultaneous non-relativistic and semi-classical limit remains comparatively less developed in the existing literature, with the exception of \cite{KM25,LW12}.
	
	In this work, we investigate the simultaneous non-relativistic and semi-classical limit of the CSH system and rigorously justify its convergence toward a compressible Euler-type hydrodynamic system with explicit rates. 
	Our result provides a unified framework that is consistent with the previously established non-relativistic limit from the CSH system to the CSS system and the semi-classical limit from the CSS system to an Euler-type hydrodynamic system, while capturing both limits within a single simultaneous limiting process. To achieve this, we introduce a suitable modulation of the scalar field given by
	\begin{equation*}
		\psi(t,x):=\sqrt{2m}\phi(t,x)\exp\left(\frac{\i mc^2 t}{\hbar}\right),
	\end{equation*}
	which removes the fast relativistic oscillations. 
	We further introduce a single scaling parameter $\e$ that simultaneously controls the speed of light and Planck's constant. 
	More precisely, we set, for $\delta>0$,
	\[
	c^{-1}=\e^{\delta},\qquad \hbar=\e.
	\]

	Under this scaling, we obtain the following one-parameter family of modulated CSH system:
	\begin{align}\label{CSH_parameter_1}
		\begin{aligned}
			&\i \e D^\e_t\psi^\e-\frac{\e^{2+2\delta}}{2}D^\e_tD^\e_t \psi^\e
			+\frac{\e^2}{2} \left(D^\e_1D^\e_1+D^\e_2D^\e_2\right)\psi^\e
			- V'\!\left(|\psi^\e|^2\right)\psi^\e=0,\\
			&\e^{\delta}\partial_tA^\e_1-\partial_1 A^\e_0=-\e\Im(\overline{\psi^\e}D^\e_2\psi^\e),\quad
			\e^{\delta}\partial_tA^\e_2-\partial_2 A^\e_0=\e\Im(\overline{\psi^\e}D^\e_1\psi^\e),\\
			&\e^{\delta}(\partial_1A^\e_2-\partial_2 A^\e_1)
			=-|\psi^\e|^2 + \e^{1+2\delta}\Im\left(\overline{\psi^\e} D^\e_t\psi^\e\right),
		\end{aligned}
	\end{align}
	where
	\[
	D^\e_t := \pa_t-\i\e^{-1}A^\e_0,\qquad 
	D^{\e}_j:=\pa_j-\i\e^{-1+\delta}A^{\e}_j,
	\]
	and the superscript $\e$ denotes dependency on the scaling parameter. 
	The precise derivation of \eqref{CSH_parameter_1} and its basic properties are presented in Sections~\ref{sec:2.2} and \ref{sec:2.3}.

	At the hydrodynamic level, by introducing the macroscopic density $\rho^\e$, momentum $J^\e$, and the relativistic correction $\rho^\e_R$, we formally obtain the following relativistic quantum hydrodynamic system:
	\begin{align}
		\begin{aligned}\label{CSH-hydro_1}
			&\pa_t(\rho^\e-\rho^\e_R)+\nabla\cdot(\rho^\e u^\e) = 0,\\
			&\pa_t((\rho^\e-\rho^\e_R)u^\e)+\nabla\cdot (\rho^\e u^\e\otimes u^\e)
			+\nabla p\left(\rho^\e\right)
			=\frac{\e^2\rho^\e}{2}\nabla\left(\frac{\square_\delta\sqrt{\rho^\e}}{\sqrt{\rho^\e}}\right),\\
			&\pa_t (\e^{\delta}A^{\e})-\nabla A^\e_0=(\rho^\e u^\e)^{\perp},\quad 
			\nabla \times (\e^{\delta}A^\e)=-\rho^\e+\rho^\e_R,
		\end{aligned}
	\end{align}
	where $p(\rho):=\rho V'(\rho)-V(\rho)=\rho^\gamma$ and $\square_\delta:=-\e^{2\delta}\pa_t^2+\Delta$. 
	Here, $A^\e=(A^\e_1,A^\e_2)$ and $D^\e=(D^\e_1,D^\e_2)$ denote the spatial gauge potential and the spatial covariant derivative, respectively.
	The precise derivation of \eqref{CSH-hydro_1} is given in Section~\ref{sec:2.3}. As $\e\to0$, the system \eqref{CSH-hydro_1} formally converges to the compressible Euler equations coupled with Chern--Simons gauge fields,
	\begin{align}
		\begin{aligned}\label{Euler-CS_1}
			&\pa_t \rho +\nabla\cdot(\rho u) = 0,\\
			&\pa_t (\rho u) +\nabla\cdot(\rho u \otimes u) +\nabla p(\rho) =0,\\
			&\pa_t A-\nabla A_0=(\rho u)^{\perp},\quad \nabla \times A=-\rho.
		\end{aligned}
	\end{align}
	We refer to \eqref{Euler-CS_1} as the Euler--Chern--Simons (Euler--CS) system. 
	This system was introduced as a semi-classical approximation of the CSS system in \cite{KM22}.

	Therefore, our goal in the present work is a rigorous and quantitative derivation of the \emph{hydrodynamic limit} from the modulated CSH system \eqref{CSH_parameter_1} toward the Euler--CS system \eqref{Euler-CS_1} as $\e\to0$ in the simultaneous non-relativistic and semi-classical regime. We defer the exact statement of the main theorem to  Section~\ref{sec:2.4}, 
	as several preliminaries are required to formulate our results.
	
	Our strategy for the asymptotic analysis is based on modulated energy estimates. 
	Modulated energy (closely related to the relative entropy method in kinetic theory and fluid mechanics) provides a quasi-metric between a wave or kinetic model and its macroscopic limit. It has been successfully applied to numerous asymptotic problems, including the Boltzmann equation \cite{BGL00}, the Vlasov--Poisson equation \cite{B00}, and the Vlasov--Navier--Stokes system \cite{MV08}. 
	Our previous works \cite{KM22,KM23a,KM24,KM23b} on semi-classical limits for the gauged 
	Schr\"odinger equations also rely on this approach. In the present work, we further develop this framework to derive the hydrodynamic limit 
	of the CSH system under the simultaneous scaling described above.
	
    The remainder of the paper is organized as follows.
    In Section~\ref{sec:2}, we review known results on the CSH system and related models, and present the derivation of the modulated CSH system together with the associated conservation laws and their hydrodynamic structure.
    Section~\ref{sec:3} is devoted to quantitative estimates for the modulated energy, which constitute the key analytic ingredient in the proof of the hydrodynamic limit.
    Based on these estimates, Section~\ref{sec:4} establishes the hydrodynamic limit of the CSH system with a quantitative convergence rate.
    Finally, Section~\ref{sec:5} concludes the paper with a summary of the main results and a discussion of possible extensions and future directions.

\section{Preliminaries}\label{sec:2}
\setcounter{equation}{0}
	In this section, we introduce the necessary preliminaries for the CSH system and present the derivation of the associated modulated system.
	In particular, we collect the conservation laws of the CSH system, which play a fundamental role in the subsequent analysis, and describe their hydrodynamic structure.
	This section also contains the precise statement of our main theorem.

\subsection{The Cauchy problem for the CSH system}\label{sec:2.1}
	To investigate the hydrodynamic limit, we first recall the well-posedness theory for the CSH system \eqref{CSH}.
	The CSH system is gauge invariant under the transformation
	\[\phi\to\phi e^{-\i\chi},\quad A_\mu \to A_\mu -c\hbar \pa_\mu \chi,\]
	for any smooth function $\chi : \mathbb{R}^{1+2} \to \mathbb{R}$.
	As a consequence, the well-posedness of the Cauchy problem requires the imposition of a suitable gauge condition. 
	Among various choices, we work under the \emph{Coulomb gauge} condition $\nabla \cdot A = 0$, which allows one to exploit null structures and elliptic features that are particularly useful for the analysis of the system \cite{H07,H11}.
	When a gauge condition is imposed, one must also address the issue of over-determination.
	In the present setting, this issue is resolved by observing that the time evolution of the constraint equation $\eqref{CSH}_3$ is preserved by the remaining equations in \eqref{CSH}.
	More precisely, one observes that
	\begin{align*}
		\p_t\left(\frac{\kappa}{c}(\p_1A_2-\p_2A_1)-\frac{2\hbar}{c}\Im(\overline{\phi}D_0\phi)\right)=0.
	\end{align*}
	Therefore, the constraint $\eqref{CSH}_3$ can be consistently imposed as part of the initial data.
	
	Accordingly, under the Coulomb gauge condition, the Cauchy problem for the CSH system \eqref{CSH} can be reformulated as follows:
	\begin{align}\label{eq2.1}
		\begin{aligned}
			&\hbar^2(D_0D_0-D_1D_1-D_2D_2)\phi +m^2c^2\phi+2mV'(2m|\phi|^2)\phi=0,\\
			&\Delta A_0 =\frac{2\hbar}{\kappa} \left(\p_1\Im(\overline{\phi}D_2\phi)-\p_2\Im(\overline{\phi}D_1\phi)\right),\\
			&\Delta A_1=-\frac{2\hbar}{\kappa}\p_2\Im(\overline{\phi}D_0\phi),\quad
			\Delta A_2=\frac{2\hbar}{\kappa}\p_1\Im(\overline{\phi}D_0\phi).
		\end{aligned}
	\end{align} 
	The system is supplemented with the initial data
	\begin{align}\label{eq2.2}
		\phi_0(x)=\phi(0,x),\quad \phi_1(x)=\p_t\phi(0,x),\quad a_{j}(x)=A_{j}(0,x),
	\end{align}
	together with the constraints
	\begin{align}\label{eq2.3}
		\p_1a_1+\p_2a_2=0,\quad 
		\frac{\kappa}{c}(\p_1a_2-\p_2a_1)=\frac{2\hbar}{c^2} \Im(\overline{\phi}_0\phi_1)-\frac{2}{c^2}a_0|\phi_0|^2,
	\end{align}
	where $a_0(x)=A_0(x,0)$ is determined by
	\begin{align*}
		\Delta a_0 =\frac{2\hbar}{\kappa}\Big(\pa_1\Im\big(\overline{\phi_0}\pa_2\phi_0-\frac{\i}{c\hbar}a_2|\phi_0|^2\big)
		-\pa_2\Im\big(\overline{\phi_0}\pa_1\phi_0-\frac{\i}{c\hbar}a_1|\phi_0|^2\big)\Big).
	\end{align*}
	
	The following result guarantees the global well-posedness of the Cauchy problem \eqref{eq2.1}--\eqref{eq2.3} under the Coulomb gauge condition.
	
	\begin{theorem}\label{Thm_Well-posedness} \emph{\cite{CC02}}
		For initial data $(\phi_0,\phi_1,a_{1},a_2)\in H^2(\bbr^2)\times H^{1}(\bbr^2) \times H^1(\bbr^2)^2$ satisfying the constraints \eqref{eq2.3}, there exists a unique global solution to \eqref{eq2.1}--\eqref{eq2.3} such that
		\[
		\phi\in C([0,T_1];H^{2}(\bbr^2))\cap C^1([0,T_1];H^1(\bbr^2)),\quad 
		A_{1},\,A_2\in C([0,T_1];H^1(\bbr^2)),
		\]
		for any $T_1>0$. Moreover, the solution depends continuously on the initial data.
	\end{theorem}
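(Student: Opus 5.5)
The plan follows the standard route for gauged wave equations: local well-posedness by a contraction in the Coulomb gauge, then extension to all times using conserved energy and charge.

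\textbf{Local existence.} Under $\nabla\cdot A=0$, the equations \eqref{eq2.1}$_{2,3}$ are elliptic. Hence $A_0,A_1,A_2$ are expressed through Riesz-type operators applied to quadratic and cubic expressions in $(\phi,\partial\phi)$. Concretely, $A_j=\Delta^{-1}\partial(\ldots)$, and by Hardy--Littlewood--Sobolev each $A_\mu$ is controlled in $L^p$ spaces and in $\dot H^1$ by norms of $\phi$ in $H^2\times H^1$.

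Substituting these expressions, \eqref{eq2.1}$_1$ becomes a nonlinear Klein--Gordon equation
\[
\hbar^2\Box\phi+m^2c^2\phi=N(\phi,\partial\phi),
\]
where $N$ contains the terms $A^\mu\partial_\mu\phi$, $A_\mu A^\mu\phi$, $\partial_tA_0\,\phi$, and $V'$. The time derivative $\partial_tA_0$ is computed from the equations via the constraint.

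I would then run a contraction in $C([0,T];H^2)\cap C^1([0,T];H^1)$ using energy estimates. In 2D, $H^2\subset L^\infty$ and $H^1\subset L^p$ for all $p<\infty$, which suffices to bound $\|N\|_{H^1}$ by a polynomial in the norm. This gives a local solution whose existence time depends only on the $H^2\times H^1$ size of the data, together with uniqueness and continuous dependence. The constraint \eqref{eq2.3} propagates by the identity displayed before \eqref{eq2.1}.

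\textbf{Global extension.} The key inputs are the conserved charge $\int\Im(\bar\phi D_0\phi)$ and the conserved energy
\[
E=\int \hbar^2|D_0\phi|^2+\hbar^2|D_j\phi|^2+m^2c^2|\phi|^2+V(2m|\phi|^2)\,dx .
\]
Note that the Chern--Simons term contributes no field energy. Since $V\ge0$, $E$ controls $\|D\phi\|_{L^2}$ and $\|\phi\|_{L^2}$. The diamagnetic inequality then gives $\||\phi|\|_{H^1}$ bounded, hence $\phi\in L^p$ uniformly for all $p<\infty$.

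Next I would upgrade to a $\phi\in H^1$ bound. Using elliptic estimates, $\|A_j\|_{L^4}\lesssim\|\phi D_0\phi\|_{L^{4/3}}$, and this is bounded by energy via H\"older. Then $\partial\phi=D\phi+\frac{\i}{c\hbar}A\phi$ is bounded in $L^2$. So the $H^1\times L^2$ norm is uniformly bounded on any interval.

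\textbf{Main obstacle.} The hardest step is propagating the $H^2\times H^1$ norm, since it is not controlled by conservation laws. I would differentiate the equation and perform an energy estimate at the next level. This yields a Gronwall inequality in which the nonlinear terms are bounded by the $H^1$-level quantities times the high norm. The difficulty is the critical 2D embedding: $H^1\not\subset L^\infty$, so terms like $A\cdot\partial\nabla\phi$ or $|\phi|^{2\gamma-2}\nabla\phi$ with arbitrary $\gamma$ threaten superlinear growth.

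I would handle this with a Brezis--Gallouet inequality,
\[
\|\phi\|_{L^\infty}\lesssim 1+\|\phi\|_{H^1}\sqrt{\log(2+\|\phi\|_{H^2})}.
\]
This gives the differential inequality $y'\lesssim y\log(2+y)$, which excludes blow-up by a double-exponential bound. Elliptic regularity supplies the needed bounds for $\nabla A$ and $\partial_t A$ in $L^p$ via Calder\'on--Zygmund, again at most logarithmically losing. Combining this a priori bound with the blow-up alternative from the local theory yields the global solution for every $T_1>0$, with continuous dependence inherited from the local step.
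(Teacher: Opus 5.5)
The paper does not prove this theorem; it quotes it from \cite{CC02}, so there is no in-paper argument to compare with. Your architecture is the standard one: local theory in $H^2\times H^1$, energy conservation with the diamagnetic inequality and Hardy--Littlewood--Sobolev for uniform $H^1\times L^2$ bounds, then a logarithmic Gr\"onwall bound and the blow-up alternative. Your $H^1$-level bounds are correct. However, the step that carries the global result is only asserted, and the mechanism you give for it does not close.

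\textbf{The higher-order estimate.} In flat derivatives, $\nabla N$ contains $A_j\,\nabla\partial_j\phi$. To get $Y'\lesssim Y\log(2+Y)$ for the $H^2\times H^1$ norm $Y$, you therefore need $\|A_j\|_{L^\infty}\lesssim\log(2+Y)$. Brezis--Gallou\"et applied to $\phi$ does not give this. Calder\'on--Zygmund at any fixed $p>2$ only gives $\|\nabla A_j\|_{L^p}\lesssim\|\phi\|_{L^\infty}\|D_0\phi\|_{L^p}$, and $\|D_0\phi\|_{L^p}$ grows like a positive power of the high norm. So $\|A_j\|_{L^\infty}$ is only polynomially controlled, you get $Y'\lesssim Y^{1+\theta}$, and blow-up is not excluded. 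You would have to prove logarithmic $L^\infty$ bounds for the potentials themselves, for instance a Trudinger-type argument optimizing $p$ against $Y$. The cleaner missing idea is to do this estimate gauge-covariantly:
\begin{itemize}
\item Take the energy $\int|D_0D_\nu\phi|^2+|D_jD_\nu\phi|^2\,\d x$ for the covariant wave equation satisfied by $D_\nu\phi$. Undifferentiated potentials then never appear; only commutators $[D_\alpha,D_\beta]=-\frac{\i}{c\hbar}F_{\alpha\beta}$ do.
\item The Chern--Simons equations make $F_{\alpha\beta}$ pointwise a multiple of $\Im(\bar\phi D\phi)$, with no Riesz transforms.
\item The error terms are then $\phi\,(D\phi)^2$ and $|\phi|^2DD\phi$. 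Using $\|D\phi\|_{L^4}^2\lesssim\|D\phi\|_{L^2}\|DD\phi\|_{L^2}$ (Ladyzhenskaya plus the diamagnetic inequality), they are bounded by $(1+\|\phi\|_{L^\infty}^2)$ times the covariant $H^2$-level norm, up to lower-order terms.
\item Only at this point does Brezis--Gallou\"et give a $Y'\lesssim Y\log(2+Y)$ inequality.
\end{itemize}

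\textbf{The local step.} Here too there is a gap. The potentials are not explicit Riesz transforms of expressions in $\phi$. The right-hand side of \eqref{eq2.1}$_2$ contains $A_j|\phi|^2$, and that of \eqref{eq2.1}$_3$ contains $A_0|\phi|^2$. So for given $\phi$ you must invert a coupled linear elliptic system whose coefficient $|\phi|^2$ is not small, and putting $A$ into the contraction gains no power of $T$. This can be repaired in two ways, but it has to be done:
\begin{itemize}
\item Give an invertibility argument with bounds depending only on norms of $\phi$. It is available: writing $A=(\partial_2 g,-\partial_1 g)$ and $w=\frac{2}{c\kappa}|\phi|^2$, the homogeneous system yields $\int wA_0^2\,\d x=-\int w|\nabla g|^2\,\d x$.
\item Alternatively, evolve $A_j$ by the first-order equations in \eqref{CSH} from the data $a_j$, and recover $A_0$ from a single Poisson equation.
\end{itemize}
Finally, for $1<\gamma<3/2$ the map $\phi\mapsto|\phi|^{2\gamma-2}\phi$ is not locally Lipschitz from $H^2$ to $H^1$. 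The contraction must then be run in a weaker norm, and continuous dependence in $H^2\times H^1$ needs a separate argument.
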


	\begin{remark}
		The result in \cite{CC02} also guarantees the existence of solutions with higher regularity.
		However, for our purposes, the regularity provided by Theorem~\ref{Thm_Well-posedness} is sufficient.
		In particular, this level of regularity ensures that the modulated energy is well defined, which is the key analytic quantity in the derivation of the hydrodynamic limit.
		We also note that well-posedness results at lower regularity levels, including the existence of energy solutions, are well established in the literature; see, for instance, \cite{H07,H11}.
	\end{remark}

\subsection{The modulated CSH system}\label{sec:2.2}
	In this subsection, we derive the modulated CSH system associated with the modulated wave function
	\[\psi(t,x)=\sqrt{2m}\exp\left(\frac{\i mc^2t}{\hbar}\right)\phi(t,x).\]
	This modulation removes the leading-order rest-mass oscillations and is standard in the study of the non-relativistic limit.
	
	We begin by rewriting the CSH system \eqref{CSH} in the following expanded form:
	\begin{align}\label{CSH_expand}
		\begin{aligned}
			&\frac{\hbar^2}{c^2}D_tD_t\phi-\hbar^2(D_1D_1+D_2D_2)\phi+c^2m^2\phi+2mV'(2m|\phi|^2)\phi=0,\\
			&\kappa\left(\frac{1}{c}\partial_tA_1-\partial_1 A_0\right)=-2\hbar\Im(\overline{\phi}D_2\phi),\quad
			\kappa\left(\frac1c\partial_tA_2-\partial_2 A_0\right)=2\hbar\Im(\overline{\phi}D_1\phi),\\
			&\frac{\kappa}{c} (\partial_1A_2-\partial_2 A_1)=\frac{2\hbar}{c^2}\Im(\overline{\phi}D_t\phi).
		\end{aligned}
	\end{align}
	Here, we introduce $D_t := cD_0$ to make the scaling in $c$ explicit.
	Using the Leibniz rule for the covariant derivative,
	$D_t(\psi(t,x) g(t)) = (D_t\psi)g +\psi(\pa_t g),$
	we obtain the identities
	\begin{align}
		\begin{aligned}\label{C-1}
			&D_t\phi = \frac{1}{\sqrt{2m}}\exp\left(-\frac{\i mc^2t}{\hbar}\right)\left(D_t\psi-\frac{\i mc^2}{\hbar}\psi\right),\\
			&D_tD_t\phi =  \frac{1}{\sqrt{2m}}\exp\left(-\frac{\i mc^2t}{\hbar}\right)\left(D_tD_t\psi-\frac{2\i mc^2}{\hbar}D_t\psi-\frac{m^2c^4}{\hbar^2}\psi\right).
		\end{aligned}
	\end{align}
	Since the oscillatory factor $\exp\left(\dfrac{\i mc^2 t}{\hbar}\right)$ is independent of the spatial variables, we also obtain, for $j=1,2$,
	\begin{align}
		\begin{aligned}\label{C-2}
			&D_j\phi =  \frac{1}{\sqrt{2m}}\exp\left(-\frac{\i mc^2t}{\hbar}\right)D_j\psi,\quad 
			D_jD_j\phi =  \frac{1}{\sqrt{2m}}\exp\left(-\frac{\i mc^2t}{\hbar}\right)D_jD_j\psi.
		\end{aligned}
	\end{align}
	Substituting \eqref{C-1}--\eqref{C-2} into \eqref{CSH_expand}$_1$, we obtain the equation for $\psi$:
	\[
	2\i m \hbar D_t\psi-\frac{\hbar^2}{c^2}D_tD_t \psi 
	+\hbar^2\left(D_1D_1+D_2D_2\right)\psi 
	-2mV'\left(|\psi|^2\right)\psi=0.
	\]
	Moreover, inserting the above relations into \eqref{CSH}$_3$ yields
	\begin{align*}
		&\frac{2\hbar}{c^2}\Im\left(\overline{\phi}D_t\phi\right) 
		= -|\psi|^2 + \frac{\hbar}{mc^2}\Im\left(\overline{\psi} D_t\psi\right).
	\end{align*}
	Consequently, we derive the following modulated CSH system:
	\begin{align}\label{CSH_modulated}
		\begin{aligned}
			&\i\hbar D_t\psi-\frac{\hbar^2}{2mc^2}D_tD_t \psi 
			+\frac{\hbar^2}{2m}\left(D_1D_1+D_2D_2\right)\psi 
			- V'\left(|\psi|^2\right)\psi=0,\\
			&\kappa\left(\frac1c\partial_tA_1-\partial_1 A_0\right)
			=-\frac{\hbar}{m}\Im(\overline{\psi}D_2\psi),\quad
			\kappa\left(\frac1c\partial_tA_2-\partial_2 A_0\right)
			=\frac{\hbar}{m}\Im(\overline{\psi}D_1\psi),\\
			&\frac{\kappa}{c} (\partial_1A_2-\partial_2 A_1)
			=-|\psi|^2 + \frac{\hbar}{mc^2}\Im\left(\overline{\psi} D_t\psi\right).
		\end{aligned}
	\end{align}
	Unlike $c$ and $\hbar$, which govern the non-relativistic and semi-classical limits,
	the parameters $m$ and $\kappa$ are fixed positive constants that can be absorbed by rescaling and therefore do not affect the structure of the limiting system.
	Accordingly, we normalize these constants by setting $m=\kappa=1$.
	
	\begin{remark}
		Formally letting $c\to\infty$ in \eqref{CSH_modulated}, all terms of order $c^{-2}$ vanish and the system reduces to the Chern--Simons--Schr\"odinger (CSS) system, as originally discussed in \cite{JP90}. 
		In this limit, the rescaled spatial gauge potentials
		$\mathcal A_j := c^{-1}A_j$ remain nontrivial and satisfy
		\begin{align}
			\begin{aligned}\label{CSS}
				&\i\hbar D_t\psi
				+\frac{\hbar^2}{2m}\left(\mathcal{D}_1\mathcal{D}_1+\mathcal{D}_2\mathcal{D}_2\right)\psi
				- V'\left(|\psi|^2\right)\psi = 0,\\
				&\kappa \left(\pa_t\mathcal{A}_1-\pa_1 A_0\right)
				=-\frac{\hbar}{m} \Im(\overline{\psi}\mathcal{D}_2\psi),\quad
				\kappa \left(\pa_t\mathcal{A}_2-\pa_2 A_0\right)
				=\frac{\hbar}{m} \Im(\overline{\psi}\mathcal{D}_1\psi),\\
				&\pa_1\mathcal{A}_2-\pa_2\mathcal{A}_1=-|\psi|^2,
			\end{aligned}
		\end{align}
		where $\mathcal{D}_j:=\partial_j-\frac{\i}{\hbar}\mathcal{A}_j$. 
		In particular, both the Chern--Simons magnetic and electric fields persist in the CSS system. 
		Rigorous justifications of the non-relativistic limit from the CSH system to the CSS system can be found in \cite{CH09,HM26}.
	\end{remark}

\subsection{Conservation laws and hydrodynamic formulation}\label{sec:2.3}

	We establish several conservation laws for the modulated CSH system \eqref{CSH_modulated}, which play a fundamental role in revealing its hydrodynamic structure and in the subsequent convergence analysis.
	We now implement the simultaneous non-relativistic and semi-classical scaling introduced in the introduction by setting
	\[
	c^{-1}=\e^{\delta},\qquad \hbar=\e.
	\]
	Applying this scaling to \eqref{CSH_modulated}, we obtain the following one-parameter family of CSH systems:
	\begin{align}\label{CSH_parameter}
		\begin{aligned}
			&\i \e D^\e_t\psi^\e-\frac{\e^{2+2\delta}}{2}D^\e_tD^\e_t \psi^\e 
			+\frac{\e^2}{2} \left(D^\e_1D^\e_1+D^\e_2D^\e_2\right)\psi^\e 
			- V'\!\left(|\psi^\e|^2\right)\psi^\e=0,\\
			&\e^{\delta}\partial_tA^\e_1-\partial_1 A^\e_0=-\e\Im(\overline{\psi^\e}D^\e_2\psi^\e),\quad
			\e^{\delta}\partial_tA^\e_2-\partial_2 A^\e_0=\e\Im(\overline{\psi^\e}D^\e_1\psi^\e),\\
			&\e^{\delta}(\partial_1A^\e_2-\partial_2 A^\e_1)
			=-|\psi^\e|^2 + \e^{1+2\delta}\Im\left(\overline{\psi^\e} D^\e_t\psi^\e\right),
		\end{aligned}
	\end{align}
	where
	\[
	D^\e_t = \pa_t-\i\e^{-1}A^\e_0,\qquad 
	D^{\e}_j=\pa_j-\i\e^{-1+\delta}A^{\e}_j.
	\]
	
	The scaled system \eqref{CSH_parameter} satisfies the following conservation laws: mass, momentum, and energy. 
	Henceforth, we refer to \eqref{CSH_parameter} as the CSH system, which will be the main object of our analysis.
	
 	\begin{proposition}\label{P2.1}
 		Let $(\psi^\e, A_{\mu}^\e)$ be a solution to the CSH system \eqref{CSH_parameter}. 
 		Define the macroscopic density and momentum by
 		\[
 		\rho^\e:=|\psi^\e|^2,\quad 
 		J^\e:=\frac{\i\e}{2}\left(\psi^\e\overline{D^\e\psi^\e}-\overline{\psi^\e}D^\e\psi^\e\right)
 		=\e\Im(\overline{\psi^\e}D^{\e}\psi^\e),
 		\]
 		whose relativistic corrections are defined by
 		\begin{align*}
 			\rho^\e_R&:=\frac{\i\e^{1+2\delta}}{2}\left(\psi^\e\overline{D_t^\e\psi^\e}
 			-\overline{\psi^\e}D^\e_t\psi^\e\right)
 			=\e^{1+2\delta}\Im(\overline{\psi^\e}D^\e_t\psi^\e),\\
 			J^\e_R&:= \frac{\e^{2+2\delta}}{2}\left(D^\e_t\psi^\e\overline{D^{\e}\psi^\e}
 			+\overline{D^\e_t\psi^\e}D^{\e}\psi^\e\right)
 			=\e^{2+2\delta}\Re(\overline{D^\e_t\psi^\e} D^{\e}\psi^\e ).
 		\end{align*}
 		Furthermore, we define the total energy by
 		\begin{align}\label{energy}
 			\mathcal{E}^\e(t):=\int_{\bbr^2}\left(
 			\frac{\e^{2+2\delta}}{2}|D^\e_t\psi^\e|^2
 			+\frac{\e^2}{2}|D^{\e}\psi^\e|^2
 			+V\left(|\psi^\e|^2\right)\right)(t,x)\,\d x.
 		\end{align}
 		The CSH system \eqref{CSH_parameter} satisfies the following conservation laws:
 		\begin{enumerate}
 			\item Mass conservation law:
 			\[
 			\pa_t(\rho^\e-\rho^\e_R)+\nabla\cdot J^\e = 0.
 			\]
 			
 			\item Momentum conservation law:
 			\begin{align*}
 				\pa_t(J^\e-J^\e_R)
 				&+\frac{\e^2}{2}\nabla\cdot \Big(
 				D^\e\psi^\e\otimes \overline{D^\e\psi^\e}
 				+\overline{D^\e\psi^\e}\otimes D^\e\psi^\e
 				-\nabla \Re(\overline{\psi^\e} D^\e \psi^\e)
 				\Big)\\
 				&\qquad
 				+\frac{\e^{2+2\delta}}{2}\nabla\pa_t\Re(\overline{\psi^\e}D^\e_t\psi^\e)
 				+\nabla p\left(|\psi^\e|^2\right)=0.
 			\end{align*}
 			
 			\item Energy conservation law:
 			\[
 			\frac{\d\mathcal{E}^\e(t)}{\d t}=0.
 			\]
 		\end{enumerate}
 	\end{proposition}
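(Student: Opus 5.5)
All three laws come from pairing the first equation of \eqref{CSH_parameter} with suitable multipliers, taking real or imaginary parts, and using the gauge-field equations to absorb the terms involving $\pa_t A^\e_\mu$ and $\pa_j A^\e_\mu$. I write the scalar equation as $L\psi^\e=0$, with
\[
L\psi:=\i\e D^\e_t\psi-\frac{\e^{2+2\delta}}{2}D^\e_tD^\e_t\psi+\frac{\e^2}{2}D^\e_jD^\e_j\psi-V'(|\psi|^2)\psi .
\]
Two facts are used throughout. The first is the product rule $\pa_\alpha(\overline f g)=\overline{D_\alpha f}\,g+\overline f\,D_\alpha g$, valid for the covariant derivatives $D^\e_t$ and $D^\e_j$. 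The second is the commutator identities
\[
[D^\e_t,D^\e_j]=-\i\e^{-1}\big(\e^{\delta}\pa_tA^\e_j-\pa_jA^\e_0\big),\qquad [D^\e_1,D^\e_2]=-\i\e^{-1+\delta}\big(\pa_1A^\e_2-\pa_2A^\e_1\big).
\]
By the second and third equations of \eqref{CSH_parameter}, both right-hand sides are expressed in terms of $J^\e$, $\rho^\e$ and $\rho^\e_R$. The regularity in Theorem~\ref{Thm_Well-posedness} justifies the computations, possibly after a density argument.

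\emph{Mass.} I multiply $L\psi^\e=0$ by $\overline{\psi^\e}$ and take the imaginary part. The first term gives $\e\Re(\overline\psi D_t\psi)=\frac{\e}{2}\pa_t\rho^\e$. The second gives $-\frac{\e^{2+2\delta}}{2}\Im(\overline\psi D_tD_t\psi)=-\frac{\e^{2+2\delta}}{2}\pa_t\Im(\overline\psi D_t\psi)=-\frac{\e}{2}\pa_t\rho^\e_R$, because $\Im|D_t\psi|^2=0$. In the same way the third gives $\frac{\e^2}{2}\pa_j\Im(\overline\psi D_j\psi)=\frac{\e}{2}\nabla\cdot J^\e$. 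The potential term is real and drops out. Multiplying through by $2/\e$ yields the mass law.

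\emph{Momentum.} I multiply $L\psi^\e=0$ by $\overline{D^\e_k\psi^\e}$, take twice the real part, and add the $\pa_k$ of a suitable multiple of the equation multiplied by $\overline\psi$. In effect, I differentiate $J^\e_k-J^\e_{R,k}$ in time and substitute the equation. Commuting $D_t$ past $D_k$ produces the Lorentz-type force terms. The term $\e\Re(\i D_t\psi\,\overline{D_k\psi})$ combined with $\pa_t J_k$ produces $\Im(\overline\psi[D_t,D_k]\psi)\sim\rho^\e E_k$. The spatial Laplacian part produces $\Re(\overline{D_j\psi}[D_j,D_k]\psi)\sim J^\e_j B$, and the $D_tD_t$ part produces the corresponding relativistic pieces $\rho^\e_R E_k$ and $J_{R}\cdot$ terms. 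The key step is to check that the field equations make these cancel exactly. Writing $E_k:=\e^\delta\pa_tA_k-\pa_kA_0$ and $B:=\e^{\delta}(\pa_1A_2-\pa_2A_1)=-(\rho^\e-\rho^\e_R)$, the force collapses to $(\rho^\e-\rho^\e_R)E-J^{\e\perp}B$. Substituting $E=(J^\e)^{\perp}$, up to sign, this becomes a multiple of $(\rho-\rho_R)(J^\perp-J^\perp)=0$. The exact cancellation of the relativistic correction $\rho^\e_R$ against $B$ is precisely why the constraint carries the term $\e^{1+2\delta}\Im(\overline\psi D_t\psi)$. The remaining terms are pure divergences: the stress tensor $\frac{\e^2}{2}(D\psi\otimes\overline{D\psi}+\text{c.c.})-\frac{\e^2}{4}\nabla\nabla\rho$, the relativistic gradient $\frac{\e^{2+2\delta}}{2}\nabla\pa_t\Re(\overline\psi D_t\psi)$, and $\Re(V'(\rho)\psi\overline{D_k\psi})=\frac12\pa_k V(\rho)$. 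The last of these recombines with $\frac12\pa_k(\rho V'(\rho))$ from the $\overline\psi$-multiplier to give $\nabla p(\rho)$. I expect the bookkeeping of signs and of the powers of $\e$ in the force cancellation to be the main obstacle, so I would verify it component-wise for $k=1$.

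\emph{Energy.} I multiply $L\psi^\e=0$ by $\overline{D^\e_t\psi^\e}$, take the real part, and integrate over $\bbr^2$. The first term vanishes. The second gives $-\frac{\e^{2+2\delta}}{4}\frac{\d}{\d t}\int|D_t\psi|^2$. For the third, integrating by parts gives $-\frac{\e^2}{2}\int\Re(\overline{D_tD_j\psi}\,D_j\psi)$, and commuting $D_tD_j=D_jD_t+[D_t,D_j]$ gives $-\frac{\e^2}{4}\frac{\d}{\d t}\int|D\psi|^2+\frac{\e}{2}\int E_j\,\Im(\overline{D_j\psi}\psi)$ up to sign. The last term is a multiple of $\int E\cdot J^\e=\int J^{\e\perp}\cdot J^\e=0$. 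The potential term gives $\frac12\frac{\d}{\d t}\int V(\rho)$. Summing and multiplying by $-2$ yields $\frac{\d}{\d t}\mathcal E^\e=0$.
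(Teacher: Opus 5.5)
Your proposal is correct and follows essentially the paper's route. You use the multipliers $\overline{\psi^\e}$ and $\overline{D^\e_t\psi^\e}$ for mass and energy; for momentum, your combination $2\Re(\overline{D^\e_k\psi^\e}L\psi^\e)-\pa_k\Re(\overline{\psi^\e}L\psi^\e)=\Re(\overline{D^\e_k\psi^\e}L\psi^\e)-\Re(\overline{\psi^\e}D^\e_kL\psi^\e)$ is, up to sign, what the paper obtains by applying $D^\e$ to the equation, multiplying by $\overline{\psi^\e}$ and subtracting the $\overline{D^\e\psi^\e}$-multiplied identity, followed by the same commutator identities and gauge-field cancellation.

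In the sign bookkeeping you flagged, fix two slips. First, with $(J^\e)^\perp=(-J^\e_2,J^\e_1)$, so that $E=(J^\e)^\perp$, the force is $(\rho^\e-\rho^\e_R)E+B\,(J^\e)^\perp=(\rho^\e-\rho^\e_R)\bigl(E-(J^\e)^\perp\bigr)=0$; there is no minus sign in front of $B$. Second, the potential term in the energy computation contributes $-\frac12\frac{\d}{\d t}\int V(\rho^\e)\,\d x$, not $+\frac12\frac{\d}{\d t}\int V(\rho^\e)\,\d x$.
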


	\begin{proof}
	The proof is lengthy and technical, and is therefore postponed to Appendix \ref{Appen:A}.
	\end{proof}
	\begin{remark}
		For clarity, we write $J^\e=\rho^\e u^\e$, where $u^\e$ is defined pointwise by
		\[
		u^\e(t,x):=
		\begin{cases}
			\dfrac{J^\e(t,x)}{\rho^\e(t,x)}, & \text{if } \rho^\e(t,x)\neq 0,\\
			0, & \text{if } \rho^\e(t,x)=0.
		\end{cases}
		\]
		Since
		\[
		|J^\e| = |\e\Im(\overline{\psi^\e}D^\e\psi^\e)|
		\leq |\psi^\e||\e D^\e\psi^\e|,
		\]
		it follows that $\rho^\e(t,x)=0$ implies $J^\e(t,x)=0$.
		Hence the identity $J^\e=\rho^\e u^\e$ is well defined everywhere.
	\end{remark}

	A classical approach to deriving hydrodynamic formulations of Schr\"odinger-type equations
	is to apply the Madelung transformation \cite{M27} to $\psi^\e$,
	\[\psi^\e(t,x) = \sqrt{\rho^\e(t,x)} \exp\left(\frac{\i}{\e} S^\e(t,x)\right).\]
	Rather than pursuing this route, we directly rewrite the momentum conservation law
	for the CSH system \eqref{CSH_parameter} in terms of hydrodynamic variables.
	Assume first that $\rho^\e\neq0$. Then we compute
		\begin{align*}
		\rho^\e_R u^\e &= \frac{\rho^\e_R}{\rho^\e}J^\e = \left(\frac{\i\e^{1+2\delta}}{2|\psi^\e|^2}(\psi^\e \overline{D^\e_t\psi^\e}-\overline{\psi^\e}D^\e_t\psi^\e)\right)\left( \frac{\i\e}{2}(\psi^\e\overline{D^\e\psi^\e}-\overline{\psi^\e}D^\e\psi^\e)\right)\\
		&=-\frac{\e^{2+2\delta}}{4|\psi^\e|^2}(\psi^\e \overline{D^\e_t\psi^\e}-\overline{\psi^\e}D^\e_t\psi^\e)(\psi^\e\overline{D^\e\psi^\e}-\overline{\psi^\e}D^\e\psi^\e)\\
		&=\frac{\e^{2+2\delta}}{2}(\overline{D^\e_t\psi^\e}D^\e\psi^\e+D^{\e}_t\psi^\e \overline{D^\e\psi^\e})-\frac{\e^{2+2\delta}}{4|\psi^\e|^2}(\psi^\e \overline{D^\e_t\psi^\e}+\overline{\psi^\e}D^\e_t\psi^\e)(\psi^\e\overline{D^\e\psi^\e}+\overline{\psi^\e}D^\e\psi^\e)\\
		& = J^\e_R -\frac{\e^{2+2\delta}}{|\psi^\e|^2}\Re(\overline{\psi^\e}D^\e_t\psi^\e)\Re(\overline{\psi^\e}D^\e\psi^\e)=J^\e_R -\frac{\e^{2+2\delta}}{4\rho^\e} (\pa_t\rho^\e)(\nabla\rho^\e).
		\end{align*}
	where we used
		\[\Re(\overline{\psi^\e}D^\e_t\psi^\e)=\Re(\overline{\psi^\e}\pa_t \psi^\e) =\frac12 \pa_t \rho^\e,\quad \Re(\overline{\psi^\e}D^\e\psi^\e) = \frac{1}{2}\nabla|\psi^\e|^2=\frac12\nabla \rho^\e.\]
	Moreover, we can formally verify that 	
		\begin{align*}
		\frac{\e^2}{2}&\nabla \cdot	(D^\e\psi^\e\otimes\overline{D^\e\psi^\e}+\overline{D^\e\psi^\e}\otimes D^\e\psi^\e)-\frac{\e^2}{2}\Delta\Re\left(\overline{\psi^\e} D^\e \psi^\e \right)\\
		&=\nabla\cdot(\rho^\e u^\e\otimes u^\e)+\frac{\e^2}{4}\nabla\cdot\left(\frac{1}{|\psi^\e|^2} D^\e|\psi^\e|^2\otimes D^\e|\psi^\e|^2\right)
		-\frac{\e^2}{4}\Delta\nabla|\psi^\e|^2\\
		&=\nabla\cdot(\rho^\e u^\e\otimes u^\e)+\frac{\e^2}{4}\nabla \cdot\left(\frac1{\rho^\e}\nabla \rho^\e\otimes \nabla \rho^\e\right)-\frac{\e^2}{4}\Delta\nabla\rho^\e\\
		&=\nabla\cdot(\rho^\e u^\e\otimes u^\e)-\frac{\e^2\rho^\e}{2}\nabla\left(\frac{\Delta \sqrt{\rho^\e}}{\sqrt{\rho^\e}}\right).
		\end{align*}
		Thus, the momentum equation in Proposition \ref{P2.1} can be rewritten as
		\begin{align*}
		\pa_t&((\rho^\e-\rho^\e_R)u^\e)+\nabla\cdot (\rho^\e u^\e\otimes u^\e) +\nabla p\left(\rho^\e\right) \\
		&= \frac{\e^{2+2\delta}}{4}\pa_t\left(\frac{\pa_t\rho^\e\nabla\rho^\e}{\rho^\e}\right)-\frac{\e^{2+2\delta}}{4}\pa_t\pa_t\nabla\rho^\e+\frac{\e^2\rho^\e}{2}\nabla\left(\frac{\Delta \sqrt{\rho^\e}}{\sqrt{\rho^\e}}\right)\\
		&=-\frac{\e^{2+2\delta}\rho^\e}{2} \nabla \left(\frac{\pa_t\pa_t\sqrt{\rho^\e}}{\sqrt{\rho^\e}}\right)+\frac{\e^2\rho^\e}{2}\nabla\left(\frac{\Delta \sqrt{\rho^\e}}{\sqrt{\rho^\e}}\right)= \frac{\e^2\rho^\e}{2}\nabla\left(\frac{\square_\delta\sqrt{\rho^\e}}{\sqrt{\rho^\e}}\right).
		\end{align*}
		
		In this manner, the formal quantum hydrodynamic system associated with the
		CSH system \eqref{CSH_parameter} is obtained from the mass and momentum conservation laws.
		Moreover, the gauge equations can be rewritten in hydrodynamic form,
		leading to the following system:
		\begin{align}
			\begin{aligned}\label{CSH-hydro}
				&\pa_t(\rho^\e-\rho^\e_R)+\nabla\cdot(\rho^\e u^\e) = 0,\\
				&\pa_t((\rho^\e-\rho^\e_R)u^\e)
				+\nabla\cdot (\rho^\e u^\e\otimes u^\e)
				+\nabla p\left(\rho^\e\right)
				=\frac{\e^2\rho^\e}{2}\nabla\left(\frac{\square_\delta\sqrt{\rho^\e}}{\sqrt{\rho^\e}}\right),\\
				&\pa_t (\e^{\delta}A^{\e})-\nabla A^\e_0=(\rho^\e u^\e)^{\perp},\quad 
				\nabla \times (\e^{\delta}A^\e)=-\rho^\e+\rho^\e_R,
			\end{aligned}
		\end{align}
		where $p(\rho)=\rho V'(\rho)-V(\rho)= \rho^\gamma$ is an isentropic pressure law, and
		$\square_\delta=- \e^{2\delta}\pa_t^2+ \Delta$ denotes the d'Alembertian operator.
		Formally, the hydrodynamic system \eqref{CSH-hydro} reduces, as $\e\to0$, to the compressible Euler--CS system		
		\begin{align}
			\begin{aligned}\label{Euler-CS}
				&\pa_t \rho +\nabla\cdot(\rho u) = 0,\\
				&\pa_t (\rho u) +\nabla\cdot(\rho u \otimes u) +\nabla p(\rho) =0,\\
				&\pa_t A-\nabla A_0=(\rho u)^{\perp},\quad 
				\nabla \times A=-\rho,
			\end{aligned}
		\end{align}
		which constitutes the hydrodynamic limit investigated in this work.
		
	\begin{remark}
	In the relativistic quantum hydrodynamic formulation \eqref{CSH-hydro}, by neglecting terms of order $c^{-2}=\e^{2\delta}$ in the non-relativistic regime, one formally obtains the hydrodynamic formulation associated with the CSS system \eqref{CSS}. 
	Within this framework, the pure semi-classical limit from the CSS system to the Euler--CS system \eqref{Euler-CS} was established in \cite{KM22}. 
		
	In contrast, the present work derives the Euler--CS system directly from the relativistic CSH system through a simultaneous non-relativistic and semi-classical scaling. 
	Thus, our result unifies the non-relativistic limit from CSH to CSS and the semi-classical limit from CSS to Euler--CS
	within a single framework.
	\end{remark}
	
\subsection{Statement of the main theorem}\label{sec:2.4}
	
	We are now ready to state the main theorem of this paper concerning the hydrodynamic limit
	from the CSH system \eqref{CSH_parameter} to the Euler--CS system \eqref{Euler-CS}.
    A fundamental tool in our analysis is the \emph{modulated energy},
	which plays a role analogous to the notion of \emph{relative entropy}
	in the hydrodynamic limit of the kinetic equations; see, for instance, \cite{B00,S09}.
	We define the modulated energy $\mathcal{H}^\e$ associated with \eqref{CSH_parameter} by
	\[\mathcal{H}^\e(t) := \int_{\mathbb{R}^2} \frac{1}{2} |(\e D^\e - \i u)\psi^\e|^2
	+ \frac{1}{2} |\e^{1+\delta} D^\e_t \psi^\e|^2+ \frac{1}{\gamma-1} p(|\psi^\e|^2 \mid \rho)\, \d x.\]
	Here the function $p(n | \rho)$ is defined by
	\[
	p(n | \rho) :=  n^\gamma - \rho^\gamma - \gamma \rho^{\gamma-1} (n - \rho),
	\]
	and represents the relative internal energy associated with the pressure law.
	
	To establish the hydrodynamic limit, we impose a \emph{well-prepared initial data} assumption,
	which ensures that the initial data for the CSH system \eqref{CSH_parameter}
	and the Euler--CS system \eqref{Euler-CS} are compatible at the level of the modulated energy. 
	We also impose Coulomb-type constraints to fix the gauge.

	\medskip
	\noindent $\bullet$ ($\mathcal{C}1$):
	The initial data $(\psi^\e_{\textup{in}}, A^{\e}_{0,\textup{in}}, A^{\e}_{\textup{in}})$
	for the CSH system \eqref{CSH_parameter} and
	$(\rho_{\textup{in}}, u_{\textup{in}}, A_{0,\textup{in}}, A_{\textup{in}})$
	for the Euler--CS system \eqref{Euler-CS} satisfy 
	\begin{align*}
	\mathcal{H}^\e(0)
	= \int_{\mathbb{R}^2}
	\frac12 | (\e D^\e - \i u_{\textup{in}})\psi^\e_{\textup{in}} |^2
	+ \frac12 |\e^{1+\delta} D^\e_t \psi^\e_{\textup{in}} |^2
	+ \frac{1}{\gamma-1} p(|\psi^\e_{\textup{in}}|^2 \mid \rho_{\textup{in}})
	\,\d x
	\le C \e^\lambda,
	\end{align*}
	for some $\lambda>0$.
	
	\medskip
	\noindent $\bullet$ ($\mathcal{C}2$):
	The initial gauge fields satisfy the Coulomb-type constraints
	\begin{align*}
	&\nabla \cdot A^\e_{\textup{in}} = 0, \qquad
	\nabla \times (\e^{\delta} A^\e_{\textup{in}})
	= -|\psi^\e_{\textup{in}}|^2
	+ \e^{1+2\delta} \Im(\overline{\psi^\e_{\textup{in}}} D^\e_t \psi^\e_{\textup{in}}),\\
	&\nabla \cdot A_{\textup{in}} = 0, \qquad
	\nabla \times A_{\textup{in}} = -\rho_{\textup{in}} .
	\end{align*}

	\begin{theorem}\label{Thm:main}
	Let $(\psi^\e, A^\e_0, A^\e)$ be the global solution to the CSH system \eqref{CSH_parameter}, 
	and let $(\rho,u,A_0, A)$ be the local smooth solution to the Euler--CS system \eqref{Euler-CS} on $[0,T_*)$, 
	subject to the initial data $(\psi^\e_{\textup{in}},A^{\e}_{0,\textup{in}},A^{\e}_{\textup{in}})$ 
	and $(\rho_{\textup{in}},u_{\textup{in}},A_{0,\textup{in}},A_{\textup{in}})$ 
	satisfying assumptions $(\mathcal{C}1)$ and $(\mathcal{C}2)$, respectively.

	\begin{enumerate}
		\item Suppose $\gamma \ge 2$. Then the following convergences hold:
		\begin{align*}
			&\rho^\e \to \rho\quad\mbox{in}\quad L^\infty([0,T_*);L^\gamma(\bbr^2)),\\
			&J^\e \to \rho u\quad\mbox{in}\quad L^\infty([0,T_*);L^\frac{2\gamma}{\gamma+1}(\bbr^2)),\quad \sqrt{\rho^\e}u^\e \to \sqrt{\rho}u\quad\mbox{in}\quad L^\infty([0,T_*);L^2(\bbr^2)),\\
			&\rho^\e_R \to 0\quad\mbox{in}\quad L^\infty([0,T_*);L^{\frac{2\gamma}{\gamma+1}}(\bbr^2)),\quad J^\e_R \to 0\quad\mbox{in}\quad L^\infty([0,T_*);L^1(\bbr^2)),\\
			&A_0^\e \to A_0\quad \mbox{in}\quad L^\infty([0,T_*);L^{2\gamma}(\bbr^2)),\quad
	\nabla A_0^\e \to \nabla A_0\quad\mbox{in}\quad L^\infty([0,T_*);L^\frac{2\gamma}{\gamma+1}(\bbr^2)),\\
			&\e^\delta A^\e \to A \quad\mbox{in}\quad L^\infty([0,T_*);L^\frac{2\gamma}{\gamma+1}(\bbr^2)).
		\end{align*}
		\item Suppose $1<\gamma<2$. Then the following convergences hold:
		\begin{align*}
			&\rho^\e \to \rho\quad\mbox{in}\quad L^\infty([0,T_*);L_{\textup{loc}}^\gamma(\bbr^2)),\\
			&J^\e \to \rho u\quad\mbox{in}\quad L^\infty([0,T_*);L_{\textup{loc}}^\frac{2\gamma}{\gamma+1}(\bbr^2)),\quad \sqrt{\rho^\e}u^\e \to \sqrt{\rho}u\quad\mbox{in}\quad L^\infty([0,T_*);L_{\textup{loc}}^2(\bbr^2)),\\
			&\rho^\e_R \to 0\quad\mbox{in}\quad L^\infty([0,T_*);L^{\frac{2\gamma}{\gamma+1}}(\bbr^2)),\quad J^\e_R \to 0\quad\mbox{in}\quad L^\infty([0,T_*);L^1(\bbr^2)),\\		
			&A_0^\e \to A_0\quad \mbox{in}\quad L^\infty([0,T_*);L_{\textup{loc}}^{2\gamma}(\bbr^2)),\quad
			\nabla A_0^\e \to \nabla A_0\quad\mbox{in}\quad L^\infty([0,T_*);L_{\textup{loc}}^\frac{2\gamma}{\gamma+1}(\bbr^2)),\\
			&\e^\delta A^\e \to A \quad\mbox{in}\quad L^\infty([0,T_*);L_{\textup{loc}}^\frac{2\gamma}{\gamma+1}(\bbr^2)).
		\end{align*}
	\end{enumerate}
	\end{theorem}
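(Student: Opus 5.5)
The plan is a modulated energy argument: derive a Gronwall-type inequality $\mathcal{H}^\e(t)\le C(\mathcal{H}^\e(0)+\e^{\beta})+C\int_0^t\mathcal{H}^\e(s)\,\d s$ on $[0,T]$ for every $T<T_*$, and then convert smallness of $\mathcal{H}^\e$ into the listed convergences.

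\textbf{Step 1: expand the modulated energy.} Expanding the square gives
\[
\mathcal{H}^\e=\mathcal{E}^\e-\int J^\e\cdot u\,\d x+\int\tfrac12\rho^\e|u|^2\,\d x+\tfrac{1}{\gamma-1}\int\big(-\rho^\gamma-\gamma\rho^{\gamma-1}(\rho^\e-\rho)\big)\,\d x .
\]
By the energy conservation of Proposition~\ref{P2.1}, $\mathcal{E}^\e$ is constant in time. We then differentiate the remaining terms, using:
\begin{itemize}
\item[(i)] the mass law $\pa_t(\rho^\e-\rho^\e_R)+\nabla\cdot J^\e=0$;
\item[(ii)] the momentum law in the form preceding \eqref{CSH-hydro};
\item[(iii)] the smooth Euler equations \eqref{Euler-CS}, written as $\pa_t u+u\cdot\nabla u+\nabla h(\rho)=0$ with enthalpy $h=\frac{\gamma}{\gamma-1}\rho^{\gamma-1}$.
\end{itemize}
The usual algebra should give
\[
\frac{\d}{\d t}\mathcal{H}^\e=-\int \nabla u:\big[(\e D^\e-\i u)\psi^\e\otimes\overline{(\e D^\e-\i u)\psi^\e}\big]_{\mathrm{Re}}\,\d x-\int(\nabla\cdot u)\,p(\rho^\e|\rho)\,\d x+\mathcal{R}^\e .
\]
The first two terms are bounded by $C\|\nabla u\|_{L^\infty}\mathcal{H}^\e$. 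The remainder $\mathcal{R}^\e$ collects the following:
\begin{itemize}
\item[(a)] terms from $\rho^\e_R$ and $J^\e_R$, such as $\int \rho^\e_R(\pa_t(\tfrac12|u|^2)-\pa_t h(\rho))$ and $\int J^\e_R\cdot\pa_t u$;
\item[(b)] the quantum term $\frac{\e^2}{4}\int\nabla\rho^\e\cdot\nabla(\nabla\cdot u)$;
\item[(c)] the term $\frac{\e^{2+2\delta}}{4}\int\pa_t\rho^\e\,\pa_t(\nabla\cdot u)$.
\end{itemize}
The gauge field drops out of the energy identity entirely, since the Lorentz-type force $J^{\perp}\cdot J$ vanishes. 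This is why the Chern--Simons coupling poses no obstruction at this stage.

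\textbf{Step 2: bound the remainders.} Using $|\rho^\e_R|\le \e^{\delta}\sqrt{\rho^\e}\,|\e^{1+\delta}D^\e_t\psi^\e|$ and $|J^\e_R|\le\e^{\delta}|\e^{1+\delta}D_t^\e\psi^\e||\e D^\e\psi^\e|$, each such term carries an explicit factor $\e^\delta$ times energy-bounded quantities. Here we need uniform control of $\mathcal{E}^\e(0)$, which follows from $(\mathcal C1)$ together with smoothness of $u_{\rm in}$ and $\rho_{\rm in}$, and of $\|\rho^\e\|_{L^1}$, which we would obtain from mass conservation up to the relativistic correction. The bounds for the quantum terms (b) and (c) are:
\begin{itemize}
\item[(b)] $\e^2|\nabla\rho^\e|\le 2\e\sqrt{\rho^\e}\,|\e D^\e\psi^\e|$, so the term is $O(\e)$;
\item[(c)] $\e^{2+2\delta}|\pa_t\rho^\e|\le2\e^{1+\delta}\sqrt{\rho^\e}\,|\e^{1+\delta}D_t^\e\psi^\e|$, so the term is $O(\e^{1+\delta})$.
\end{itemize}
Gronwall then gives
\[
\sup_{[0,T]}\mathcal{H}^\e\le C\e^{\min\{\lambda,\delta,1\}} .
\]

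\textbf{Step 3: convert to convergences.} For $\gamma\ge2$ we use $p(n|\rho)\ge c|n-\rho|^\gamma$, which yields $\rho^\e\to\rho$ in $L^\gamma$. For $1<\gamma<2$ this inequality only holds on bounded sets or in $L^2$ near $\rho$, whence the local statements. Writing $\sqrt{\rho^\e}u^\e-\sqrt{\rho}u$ in terms of $\Im(\overline{\psi^\e}(\e D^\e-\i u)\psi^\e)/\sqrt{\rho^\e}$ plus $(\sqrt{\rho^\e}-\sqrt\rho)u$ gives the $L^2$ convergence. The convergence of $J^\e$ follows by H\"older with exponents $2$ and $2\gamma$. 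The relativistic corrections vanish by the $\e^\delta$ bounds of Step~2.

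For the gauge fields, we use that $(\mathcal C2)$ is propagated by the equations, so the Coulomb condition holds for all times. Then $\e^\delta A^\e-A=\nabla^\perp\Delta^{-1}(\rho^\e-\rho-\rho^\e_R)$, and Hardy--Littlewood--Sobolev (with the 2D endpoint handled through the structure of the equations) gives convergence in $L^{2\gamma/(\gamma+1)}$. Similarly $A^\e_0-A_0$ solves $\Delta(A^\e_0-A_0)=\nabla\times(J^\e-\rho u)$ up to $\e^\delta\pa_t$ terms, giving the stated convergences of $A_0^\e$ and $\nabla A_0^\e$.

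\textbf{Main obstacle.} The hardest step is the exact time-derivative identity of Step~1 in the presence of the relativistic terms, since $\rho^\e_R$ enters the mass law and not the energy. I would handle it by pairing the mass law with $\tfrac12|u|^2-h(\rho)$ and the momentum law with $u$, and then verifying that all leftover terms involve $\rho^\e_R$ or $J^\e_R$, each of which carries a factor $\e^\delta$.
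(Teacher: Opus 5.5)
Your overall plan matches the paper's: expand $\mathcal{H}^\e$ using energy conservation, differentiate with the mass/momentum laws and the Euler equations, apply Gronwall, then use relative-pressure inequalities. However, the identity in Step~1 is false as written, and your proposed handling of the ``main obstacle'' would not work.

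The relativistic corrections enter the conservation laws under a time derivative. Pairing $\pa_t(\rho^\e-\rho^\e_R)+\nabla\cdot J^\e=0$ with $\frac12|u|^2-h(\rho)$ leaves $\int\pa_t\rho^\e_R\,(\frac12|u|^2-h(\rho))\,\d x$. Pairing the momentum law with $u$ leaves $\int\pa_t J^\e_R\cdot u\,\d x$ and $-\frac{\e^{2+2\delta}}{4}\int\pa_t^2\rho^\e\,\nabla\cdot u\,\d x$. These terms contain $D^\e_tD^\e_t\psi^\e$, i.e.\ second time derivatives, which the energy does not control uniformly in $\e$. The prefactor $\e^\delta$ does not survive the time derivative: by the mass law itself, $\pa_t\rho^\e_R=\pa_t\rho^\e+\nabla\cdot J^\e$. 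So your claim that each leftover term is $\e^\delta$ times energy-bounded quantities fails for exactly these terms.

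The terms you list in (a) and (c) appear only after moving $\pa_t$ onto $u$ and $\rho$. That integration by parts in time produces a total derivative $-\frac{\d}{\d t}\mathcal{K}^\e(t)$, with
\[
\mathcal{K}^\e(t):=\int_{\bbr^2}J^\e_R\cdot u\,\d x+\frac{\e^{2+2\delta}}{4}\int_{\bbr^2}\pa_t\rho^\e\,\nabla\cdot u\,\d x-\int_{\bbr^2}\rho^\e_R\Big(\frac12|u|^2-h(\rho)\Big)\,\d x ,
\]
and this term is missing from your identity. Consequently $\frac{\d}{\d t}\mathcal{H}^\e$ admits no pointwise-in-time bound $C\mathcal{H}^\e+C\e^{\min\{1,\delta\}}$, and the Gronwall step does not close as stated.

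The missing idea is the paper's relativistic correction functional; $\mathcal{K}^\e$ is exactly the paper's $\mathcal{R}^\e$, renamed to avoid a clash with your remainder. The correct identity is $\frac{\d}{\d t}(\mathcal{H}^\e+\mathcal{K}^\e)=$ (your quadratic and relative-pressure terms) $+$ (a)--(c) (up to signs). Hence $\frac{\d}{\d t}(\mathcal{H}^\e+\mathcal{K}^\e)\le C\mathcal{H}^\e+C\e^{\min\{1,\delta\}}$. Crucially, $\mathcal{K}^\e$ involves only $\psi^\e$, $\e D^\e\psi^\e$ and $\e^{1+\delta}D^\e_t\psi^\e$; note $\e^{2+2\delta}\pa_t\rho^\e=2\e^{1+\delta}\Re(\overline{\psi^\e}\,\e^{1+\delta}D^\e_t\psi^\e)$. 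Your Step~2 bounds therefore give $|\mathcal{K}^\e(t)|\le C\e^\delta$ for every $t$. Integrate in time, move $\mathcal{K}^\e(t)-\mathcal{K}^\e(0)$ to the right-hand side, and Gronwall yields $\mathcal{H}^\e\le C\e^{\min\{1,\delta,\lambda\}}$.

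A second, smaller error is in Step~3 for $\e^\delta A^\e$. The operator $\nabla^\perp\Delta^{-1}$ is a Riesz potential of order one, mapping $L^p\to L^{2p/(2-p)}$ only for $1<p<2$. Its range therefore always sits above $L^2$ and can never be $L^{2\gamma/(\gamma+1)}$. Moreover, $\rho^\e-\rho$ is controlled only in $L^\gamma$ with $\gamma\ge2$, which is outside the admissible range. The paper instead integrates $\pa_t(\e^\delta A^\e-A)=\nabla(A^\e_0-A_0)+(J^\e-\rho u)^\perp$ in time. This uses the $L^{2\gamma/(\gamma+1)}$ bounds already obtained for the right-hand side, together with a bound on the initial difference.

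For $A^\e_0$, under the Coulomb gauge $-\Delta A^\e_0=\nabla\cdot(J^\e)^\perp$ holds exactly, with no $\e^\delta\pa_t$ corrections. Hardy--Littlewood--Sobolev from $L^{2\gamma/(\gamma+1)}$ to $L^{2\gamma}$, plus Calder\'on--Zygmund for $\nabla A^\e_0$, then gives the stated convergences.
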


		\begin{remark}
		The local-in-time existence and regularity of smooth solutions ($H^s$ with sufficiently large $s$) 
		to the compressible Euler equations $\eqref{Euler-CS}_{1,2}$ are
		 standard; 
		see, for instance, \cite{L98, M84}. 
		Therefore, in this theorem, we consider a local-in-time smooth solution 
		$\rho, u \in C([0,T_*);H^s)$ with $s>3$, where $T_*$ denotes the lifespan of the Euler equations. 
		The gauge potentials are then recovered from $\eqref{Euler-CS}_3$ together with the assumption $(\mathcal{C}2)$.
	\end{remark}

	\begin{remark}
	At this stage, we establish only qualitative convergence of the hydrodynamic quantities. Quantitative convergence rates for the
	hydrodynamic limit, depending on the parameters $\delta$, $\lambda$, and $\gamma$, will be provided in Section~\ref{sec:4}.
	\end{remark}

	We outline the strategy for proving Theorem~\ref{Thm:main}. The first step is to estimate the modulated energy $\mathcal{H}^\e$, showing that it satisfies  
	\begin{equation}\label{est-modulated-energy}
	\sup_{0\leq t\leq T_*} \mathcal{H}^\e(t)\leq C\e^\alpha, \quad\mbox{where}\quad \alpha = \min\{1,\delta,\lambda\}.
	\end{equation}
	Once this bound is established, the convergence results follow from standard estimates. 
	Therefore, obtaining \eqref{est-modulated-energy} is the central step in the analysis of the non-relativistic and semi-classical limits of the CSH system.  Its proof will be given in Section~\ref{sec:3}, while Section~ \ref{sec:4} is devoted to convergence estimates.

\section{Modulated Energy Estimates for the CSH System}\label{sec:3}
\setcounter{equation}{0}

In this section, we derive a priori estimates for the modulated energy $\mathcal{H}^\e$ associated with the CSH system \eqref{CSH_parameter}, which constitute the core analytic ingredient in the proof of the main theorem.
Our approach is based on a careful decomposition of the modulated energy and the introduction of a suitable relativistic correction functional.
This strategy is inspired by the modulated energy method developed in \cite{LW12}, adapted here to the CSH setting.

We begin by recalling the definition of the modulated energy $\mathcal{H}^\e$ and explaining its relation to the total energy $\mathcal{E}^\e$ defined in \eqref{energy} for the CSH system \eqref{CSH_parameter}.
Expanding the first term in $\mathcal{H}^\e$, we obtain
\begin{align*}
	\int_{\bbr^2}\frac{1}{2}|(\e D^\e-\i u)\psi^\e|^2\,\d x 
	&= \int_{\bbr^2}\frac{1}{2}|\e D^\e\psi^\e|^2 - \frac{\i\e}{2}(\psi^\e\overline{D^\e\psi^\e}-\overline{\psi^\e}D^\e\psi^\e)\cdot u+\frac{1}{2} |\psi^\e|^2|u|^2\,\d x\\
	&=\int_{\bbr^2}\frac{1}{2}|\e D^\e\psi^\e|^2\,\d x-\int_{\bbr^2}J^\e\cdot u\,\d x+\int_{\bbr^2}\frac{1}{2}\rho^\e|u|^2\,\d x.
\end{align*}
Consequently, we obtain the identity
\begin{align}\label{energy_modulated}
	\begin{aligned}
		\mathcal{H}^\e(t)&=\int_{\bbr^2}\frac{1}{2}|(\e D^\e-\i u)\psi^\e|^2\,\d x+\int_{\bbr^2}\frac{1}{2}|\e^{1+\delta} D^\e_t\psi^\e|^2\,\d x +\int_{\bbr^2}\frac{1}{\gamma-1}p\left(\rho^\e|\rho\right) \,\d x\\
		&= \mathcal{E}^\e(t) -\int_{\bbr^2}  J^\e \cdot u \,\d x +\int_{\bbr^2}\frac{1}{2}\rho^\e|u|^2\,\d x+\int_{\bbr^2}\left(\rho-\frac{\gamma}{\gamma-1}\rho^\e\right)\rho^{\gamma-1}\,\d x.
	\end{aligned}
\end{align}
In addition, we introduce the following {\it relativistic correction functional}:
\begin{equation}\label{correction}
	\mathcal{R}^\e(t):=\int_{\bbr^2} J^\e_R \cdot u \,\d x  +\int_{\bbr^2}\frac{\e^{2+2\delta}}{4}\pa_t \rho^\e \nabla \cdot u \,\d x-\int_{\bbr^2}\frac{1}{2}\rho^\e_R|u|^2\,\d x +\frac{\gamma}{\gamma-1}\int_{\bbr^2}\rho^\e_R\rho^{\gamma-1}\,\d x,
\end{equation}
which is an auxiliary quantity introduced to absorb additional relativistic remainder terms arising in the time derivative of the modulated energy; see the proof of Lemma~\ref{lem3.1}. We now establish estimates for $\mathcal{H}^\e$ and $\mathcal{R}^\e$.

\begin{proposition}\label{P3.1}
	Assume $\gamma>1$. Let $(\psi^\e, A^\e_0, A^\e)$ be the global solution to the CSH system \eqref{CSH_parameter}, 
	and let $(\rho,u,A_0, A)$ be the local smooth solution to the Euler--CS system \eqref{Euler-CS} on $[0,T_*)$, 
	subject to the initial data $(\psi^\e_{\textup{in}},A^{\e}_{0,\textup{in}},A^{\e}_{\textup{in}})$ 
	and $(\rho_{\textup{in}},u_{\textup{in}},A_{0,\textup{in}},A_{\textup{in}})$ 
	satisfying assumptions $(\mathcal{C}1)$ and $(\mathcal{C}2)$, respectively. Then, the following estimate holds:
	\begin{equation}\label{modulated_auxiliary_est}
		\frac{\d}{\d t}(\mathcal{H}^\e(t)+\mathcal{R}^\e(t)) \le C\mathcal{H}^\e (t)+C\e^{\min\{1,\delta\}},\quad \text{for} \quad 0\le t\le T_*.
	\end{equation}
\end{proposition}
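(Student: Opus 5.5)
We start from the identity \eqref{energy_modulated}, which writes $\mathcal{H}^\e$ as $\mathcal{E}^\e$ minus and plus explicit integrals of $J^\e$, $\rho^\e$ tested against the smooth limit fields $u$ and $\rho^{\gamma-1}$. By Proposition~\ref{P2.1}(3), $\mathcal{E}^\e$ is conserved. So $\frac{\d}{\d t}\mathcal{H}^\e$ reduces to time derivatives of
\[
-\int J^\e\cdot u,\qquad \int\tfrac12\rho^\e|u|^2,\qquad \int\Big(\rho-\tfrac{\gamma}{\gamma-1}\rho^\e\Big)\rho^{\gamma-1}.
\]
The relativistic mass and momentum laws of Proposition~\ref{P2.1}(1)--(2) do not give $\pa_t\rho^\e$ and $\pa_t J^\e$ directly. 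They give $\pa_t(\rho^\e-\rho^\e_R)$ and $\pa_t(J^\e-J^\e_R)$, and the momentum law also carries the extra term $\frac{\e^{2+2\delta}}{2}\nabla\pa_t\Re(\overline{\psi^\e}D^\e_t\psi^\e)=\frac{\e^{2+2\delta}}{4}\nabla\pa_t^2\rho^\e$. This is exactly why $\mathcal{R}^\e$ is added: its four terms are built so that, in $\frac{\d}{\d t}(\mathcal{H}^\e+\mathcal{R}^\e)$, we may replace $\rho^\e\to\rho^\e-\rho^\e_R$ and $J^\e\to J^\e-J^\e_R$ wherever a time derivative falls on them. The term $\frac{\e^{2+2\delta}}{4}\int\pa_t\rho^\e\,\nabla\cdot u$ absorbs the $\nabla\pa_t^2\rho^\e$ contribution after one integration by parts in $x$. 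The first step is therefore to differentiate $\mathcal{H}^\e+\mathcal{R}^\e$ and substitute the conservation laws. Every term without a time derivative on $\rho^\e,J^\e,\rho^\e_R,J^\e_R$ is a remainder, in which $\pa_t$ falls on $u$ or $\rho$.

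Next we treat the main (non-relativistic) part as in the semi-classical CSS argument of \cite{KM22}. We use the Euler--CS equations for $\pa_t u=-u\cdot\nabla u-\nabla\frac{\gamma}{\gamma-1}\rho^{\gamma-1}$ and $\pa_t\rho$, and integrate by parts. The quantum stress $\frac{\e^2}{2}(D^\e\psi^\e\otimes\overline{D^\e\psi^\e}+\mathrm{c.c.})$ combines with $\rho^\e u\otimes u$ and $J^\e\otimes u$ into
\[
-\int \nabla u:\Re\big((\e D^\e-\i u)\psi^\e\otimes\overline{(\e D^\e-\i u)\psi^\e}\big),
\]
which is bounded by $\|\nabla u\|_{L^\infty}\mathcal{H}^\e$. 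The term $\frac{\e^2}{4}\Delta\nabla\rho^\e$ gives $\frac{\e^2}{4}\int\nabla\rho^\e\cdot\nabla(\nabla\cdot u)$. We write $\e\nabla\rho^\e=2\Re(\overline{\psi^\e}\e D^\e\psi^\e)$, so this is $\le C\e(\|\rho^\e\|_{L^1}+\mathcal{E}^\e)$, which is $O(\e)$ by mass and energy conservation. The pressure terms assemble into $-(\gamma-1)\int p(\rho^\e|\rho)\nabla\cdot u$, bounded by $C\mathcal{H}^\e$. The gauge fields do not appear, because $J^\e$ and the stress are gauge invariant and the momentum law in Proposition~\ref{P2.1} contains no Lorentz-force term. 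So the CS structure enters only through the uniform bounds.

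The remaining relativistic remainders are the hard step. They are the terms in which $\pa_t$ hits $u$ or $\rho^{\gamma-1}$ inside $\mathcal{R}^\e$, for instance $\int J^\e_R\cdot\pa_t u$, $\frac{\e^{2+2\delta}}{4}\int\pa_t\rho^\e\,\nabla\cdot\pa_t u$, $\int\rho^\e_R\,u\cdot\pa_t u$ and $\int\rho^\e_R\,\pa_t\rho^{\gamma-1}$. Each carries a power of $\e^{\delta}$. Using $|\e^{1+\delta}D^\e_t\psi^\e|,|\e D^\e\psi^\e|\in L^2$ uniformly from $\mathcal{E}^\e$, we get
\[
|J^\e_R|\le \e^{\delta}\,|\e^{1+\delta}D^\e_t\psi^\e|\,|\e D^\e\psi^\e|,\qquad |\rho^\e_R|\le\e^{\delta}|\psi^\e|\,|\e^{1+\delta}D^\e_t\psi^\e|,
\]
and $\e^{2+2\delta}|\pa_t\rho^\e|\le 2\e^{1+\delta}|\psi^\e||\e^{1+\delta}D^\e_t\psi^\e|$. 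Cauchy--Schwarz with the smooth bounded $u,\rho$ and $\|\psi^\e\|_{L^2}$ conserved (via the modified mass, controlled up to $\rho^\e_R$) then gives $O(\e^{\delta})$. I expect the main obstacle to be twofold. First, the exact algebraic cancellation must be verified, in particular for the $\nabla\pa_t^2\rho^\e$ term and the $\rho^\e_R|u|^2$ correction. Second, the mass $\|\rho^\e\|_{L^1}$ must be controlled uniformly, since only $\rho^\e-\rho^\e_R$ is conserved; here $\|\rho^\e_R\|_{L^1}\le\e^{\delta}\|\psi^\e\|_{L^2}\sqrt{\mathcal{E}^\e}$ closes a bootstrap. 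Collecting all terms yields \eqref{modulated_auxiliary_est}.
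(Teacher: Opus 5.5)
Your plan reproduces the paper's argument. Energy conservation reduces $\frac{\d}{\d t}\mathcal{H}^\e$ to the three explicit time derivatives, and the relativistic mass and momentum laws are inserted. $\mathcal{R}^\e$ absorbs the $\pa_t\rho^\e_R$, $\pa_t J^\e_R$ and $\frac{\e^{2+2\delta}}{4}\nabla\pa_t^2\rho^\e$ contributions exactly as in Lemma~\ref{lem3.1}. The resulting terms are the paper's $\mathcal{I}_{21}$--$\mathcal{I}_{27}$, estimated the same way. There are two harmless slips: the pressure terms assemble into $-\int p(\rho^\e|\rho)\,\nabla\cdot u\,\d x$ with no factor $\gamma-1$, and the $\e^2$ term is $-\frac{\e^2}{4}\int\nabla\rho^\e\cdot\nabla(\nabla\cdot u)\,\d x$.

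The step that does not close under the stated hypotheses is your control of $\psi^\e$ through $\|\psi^\e\|_{L^2}$. You use it in the $\e^2$ term and in the $\rho^\e_R$ and $\e^{2+2\delta}\pa_t\rho^\e$ remainders. Your quadratic inequality correctly propagates an $L^2$ bound in time from the conserved quantity $\int(\rho^\e-\rho^\e_R)\,\d x$. However, nothing in $(\mathcal{C}1)$--$(\mathcal{C}2)$ bounds that quantity uniformly in $\e$. $(\mathcal{C}1)$ controls only $\int p(\rho^\e_{\textup{in}}|\rho_{\textup{in}})\,\d x$, and where $\rho_{\textup{in}}$ is small this is essentially $\int(\rho^\e_{\textup{in}})^\gamma\,\d x$. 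In $\mathbb{R}^2$ that does not control the mass. For example, you can add to admissible data a far-away bump of height $h$ and radius $R$ with $h^\gamma R^2\to0$ but $hR^2\to\infty$, while keeping $\mathcal{H}^\e(0)$ small. Your constant $C$ then depends on $\e$. For instance, your bound $C\e\|\psi^\e\|_{L^2}\|\e D^\e\psi^\e\|_{L^2}$ for the $\e^2$ term need not even tend to zero.

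The paper avoids the mass altogether. It uses $\|\psi^\e\|_{L^{2\gamma}}^{2\gamma}=\int(\rho^\e)^\gamma\,\d x=(\gamma-1)\int V(\rho^\e)\,\d x\le(\gamma-1)\mathcal{E}^\e$ and places the smooth coefficients in $L^{\frac{2\gamma}{\gamma-1}}$. For example, $|\mathcal{I}_{22}|\le\frac{\e}{2}\|\e D^\e\psi^\e\|_{L^2}\|\psi^\e\|_{L^{2\gamma}}\|\Delta u\|_{L^{\frac{2\gamma}{\gamma-1}}}$, and likewise for the $\rho^\e_R$ and $\pa_t\rho^\e$ terms. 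The $J^\e_R$ term needs no bound on $\psi^\e$ at all.

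The uniform energy bound you rely on does follow from $(\mathcal{C}1)$. Young's inequality gives $n^\gamma\le 2p(n|\rho)+C\rho^\gamma$, and $|\e D^\e\psi^\e|^2\le 2|(\e D^\e-\i u)\psi^\e|^2+2\rho^\e|u|^2$. With this substitution your argument goes through with no mass control and no bootstrap. Otherwise you must add $\sup_\e\|\psi^\e_{\textup{in}}\|_{L^2}<\infty$ as an extra hypothesis.
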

To prove Proposition \ref{P3.1}, we first present the following lemma, which is used to derive the desired bound for the time derivative of $\mathcal{H}^\e + \mathcal{R}^\e$.
\begin{lemma}\label{lem3.1}
		Assume $\gamma>1$. Let $(\psi^\e, A^\e_0, A^\e)$ be the global solution to the CSH system \eqref{CSH_parameter}, 
		and let $(\rho,u,A_0, A)$ be the local smooth solution to the Euler--CS system \eqref{Euler-CS} on $[0,T_*)$, 
		subject to the initial data $(\psi^\e_{\textup{in}},A^{\e}_{0,\textup{in}},A^{\e}_{\textup{in}})$ 
		and $(\rho_{\textup{in}},u_{\textup{in}},A_{0,\textup{in}},A_{\textup{in}})$ 
		satisfying assumptions $(\mathcal{C}1)$ and $(\mathcal{C}2)$, respectively. Then, for $0\leq t\leq T_*$, we have
	\begin{align}
		\begin{aligned}\label{modulated-energy-est}
	&	\frac{\d}{\d t}(\mathcal{H}^\e(t)+\mathcal{R}^\e(t))\\
	&\qquad= -\int_{\bbr^2} \bigg[\frac{\e^2}{2}(D^\e\psi^\e\otimes\overline{D^\e\psi^\e}+\overline{D^\e\psi^\e}\otimes D^\e\psi^\e)-J^\e\otimes u-u\otimes J^\e+\rho^\e u\otimes\bigg]:\nabla u\,\d x \\
		&\qquad\quad-\int_{\bbr^2}\frac{\e^2}{4}\nabla 
		\rho^\e \cdot(\Delta u)+\left((\rho^\e)^\gamma-\rho^\gamma-\gamma\rho^{\gamma-1}(\rho^\e-\rho)\right)(\nabla\cdot u)\,\d x\\
		&\qquad\quad+\int_{\bbr^2} J^\e_R\cdot \pa_t u+\frac{\e^{2+2\delta}}{4}\pa_t\rho^\e  \left(\nabla \cdot \pa_t u\right)-\rho^\e_R u\cdot\pa_t u+\frac{\gamma}{\gamma-1}\rho^\e_R\pa_t(\rho^{\gamma-1})\,\d x.
		\end{aligned}
	\end{align}
\end{lemma}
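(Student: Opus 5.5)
We differentiate the decomposition \eqref{energy_modulated} of $\mathcal{H}^\e$ and the definition \eqref{correction} of $\mathcal{R}^\e$ term by term. The conservation laws of Proposition~\ref{P2.1} replace time derivatives of $\rho^\e,\rho^\e_R,J^\e,J^\e_R$. The Euler system $\eqref{Euler-CS}_{1,2}$, written for smooth solutions as $\pa_t\rho+\nabla\cdot(\rho u)=0$ and $\pa_t u+u\cdot\nabla u+\frac{\gamma}{\gamma-1}\nabla\rho^{\gamma-1}=0$, replaces time derivatives of $\rho,u$. We assume enough regularity (Theorem~\ref{Thm_Well-posedness}, or a density argument) to integrate by parts with no boundary terms.

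\textbf{Step 1: the $\mathcal{H}^\e$ part.} By Proposition~\ref{P2.1}(3), $\frac{\d}{\d t}\mathcal{E}^\e=0$. For $-\int J^\e\cdot u$, we have
\[
-\frac{\d}{\d t}\int J^\e\cdot u=-\int\pa_t(J^\e-J^\e_R)\cdot u-\int \pa_t J^\e_R\cdot u-\int J^\e\cdot\pa_t u .
\]
Using the momentum law, the first piece becomes, after integrating by parts,
\[
-\int\Big[\frac{\e^2}{2}(D^\e\psi^\e\otimes\overline{D^\e\psi^\e}+\cdots)\Big]:\nabla u-\int \frac{\e^2}{4}\nabla\rho^\e\cdot\Delta u+\int\frac{\e^{2+2\delta}}{4}\pa_t\nabla\rho^\e\cdot u-\int p(\rho^\e)\nabla\cdot u ,
\]
where we use $\Re(\overline{\psi^\e}D^\e\psi^\e)=\frac12\nabla\rho^\e$ and $\Re(\overline{\psi^\e}D^\e_t\psi^\e)=\frac12\pa_t\rho^\e$. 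For $\frac12\int\rho^\e|u|^2$ and $-\frac{\gamma}{\gamma-1}\int\rho^\e\rho^{\gamma-1}$, we write $\pa_t\rho^\e=\pa_t(\rho^\e-\rho^\e_R)+\pa_t\rho^\e_R=-\nabla\cdot J^\e+\pa_t\rho^\e_R$, which leaves terms $\int\pa_t\rho^\e_R(\frac12|u|^2-\frac{\gamma}{\gamma-1}\rho^{\gamma-1})$. The non-relativistic terms, namely $J^\e\cdot(u\cdot\nabla u)$, $-J^\e\cdot\pa_t u$, $\rho^\e u\cdot\pa_t u$ and the pressure terms, are then reorganized exactly as in the CSS case \cite{KM22}. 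Substituting $\pa_t u=-u\cdot\nabla u-\frac{\gamma}{\gamma-1}\nabla\rho^{\gamma-1}$ yields $(J^\e\otimes u+u\otimes J^\e-\rho^\e u\otimes u):\nabla u$, and combining the $\rho^{\gamma-1}$ terms with $\pa_t\rho=-\nabla\cdot(\rho u)$ and $-\int p(\rho^\e)\nabla\cdot u$ yields the relative pressure $p(\rho^\e|\rho)\nabla\cdot u$. This reproduces the first two lines of \eqref{modulated-energy-est}.

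\textbf{Step 2: the $\mathcal{R}^\e$ part.} The leftover relativistic terms from Step~1 are
\[
-\int\pa_tJ^\e_R\cdot u+\int\frac{\e^{2+2\delta}}{4}\pa_t\nabla\rho^\e\cdot u+\int\pa_t\rho^\e_R\Big(\frac12|u|^2-\frac{\gamma}{\gamma-1}\rho^{\gamma-1}\Big).
\]
Each of these contains a time derivative of an $\e$-quantity, and $\mathcal{R}^\e$ is designed to cancel them. We have
\[
\frac{\d}{\d t}\int J^\e_R\cdot u=\int\pa_tJ^\e_R\cdot u+\int J^\e_R\cdot\pa_t u,
\]
and, after integrating by parts, $\frac{\d}{\d t}\int\frac{\e^{2+2\delta}}{4}\pa_t\rho^\e\,\nabla\cdot u$ equals $-\int\frac{\e^{2+2\delta}}{4}\pa_t\nabla\rho^\e\cdot u$ plus $\int\frac{\e^{2+2\delta}}{4}\pa_t\rho^\e\,\nabla\cdot\pa_t u$. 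Likewise, the last two terms of $\mathcal{R}^\e$ cancel the $\pa_t\rho^\e_R$ terms and leave $-\rho^\e_R u\cdot\pa_t u+\frac{\gamma}{\gamma-1}\rho^\e_R\pa_t\rho^{\gamma-1}$. Summing gives exactly the third line of \eqref{modulated-energy-est}.

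The main obstacle is the bookkeeping in Step~1. The signs of the relativistic leftovers must match the terms of $\mathcal{R}^\e$, which depends on correctly using the form of the momentum law, with its $\frac{\e^{2+2\delta}}{2}\nabla\pa_t\Re(\overline{\psi^\e}D^\e_t\psi^\e)$ term and $\pa_t(J^\e-J^\e_R)$ rather than $\pa_t J^\e$. One must also track that the mass law involves $\rho^\e-\rho^\e_R$. I would first verify these cancellations separately, then handle the non-relativistic terms by the standard CSS computation.
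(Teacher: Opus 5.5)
Your proposal is correct and follows the paper's proof essentially step for step. You differentiate the decomposition \eqref{energy_modulated}, replace $\pa_t\rho^\e=\pa_t\rho^\e_R-\nabla\cdot J^\e$ and $\pa_t(J^\e-J^\e_R)$ via the conservation laws and $\pa_t\rho,\pa_t u$ via the Euler system, and recognize the leftover relativistic terms as $-\frac{\d}{\d t}\mathcal{R}^\e$ plus the third line of \eqref{modulated-energy-est}. The paper does the same, except that it extracts these total time derivatives inside each of $\mathcal{I}_{11},\mathcal{I}_{12},\mathcal{I}_{13}$ rather than collecting them at the end.

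One small slip needs fixing. Since $\frac{\e^{2+2\delta}}{2}\nabla\pa_t\Re(\overline{\psi^\e}D^\e_t\psi^\e)=\frac{\e^{2+2\delta}}{4}\nabla\pa_t^2\rho^\e$, the leftover from the momentum law should be $\int\frac{\e^{2+2\delta}}{4}\pa_t^2\nabla\rho^\e\cdot u$, and the matching piece of $\frac{\d}{\d t}\int\frac{\e^{2+2\delta}}{4}\pa_t\rho^\e\,\nabla\cdot u$ also involves $\pa_t^2\nabla\rho^\e$ rather than $\pa_t\nabla\rho^\e$. Because you dropped the same time derivative in both places, the cancellation still goes through.
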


\begin{proof}
By the conservation of total energy, the time derivative of the modulated energy in \eqref{energy_modulated} can be written as
\begin{align*}
	\frac{\d \mathcal{H}^\e(t)}{\d t} &= -\frac{\d}{\d t} \int_{\bbr^2} J^\e \cdot u \, \d x + \frac{\d}{\d t} \int_{\bbr^2} \frac{1}{2} \rho^\e |u|^2 \, \d x + \frac{\d}{\d t} \int_{\bbr^2}  \left( \rho - \frac{\gamma}{\gamma - 1} \rho^\e \right) \rho^{\gamma - 1} \, \d x=: \sum_{\ell=1}^3 \mathcal{I}_{1\ell}(t).
\end{align*}
We now estimate each term $ \mathcal{I}_{1\ell} $ for $ \ell=1,2,3 $ separately. \\

\noindent $\bullet$ (Estimate of $ \mathcal{I}_{11} $): We split the estimate of $ \mathcal{I}_{11} $ as
\begin{align*}
	\mathcal{I}_{11} &= -\frac{\d}{\d t} \int_{\bbr^2} J^\e \cdot u \, \d x 
	= -\int_{\bbr^2} \pa_t J^\e \cdot u \, \d x - \int_{\bbr^2} J^\e \cdot (\pa_t u) \, \d x =: \mathcal{I}_{111} + \mathcal{I}_{112}.
\end{align*}
Using the momentum equation in Proposition \ref{P2.1} (2), we estimate $ \mathcal{I}_{111} $ as
\begin{align*}
	\mathcal{I}_{111} &= -\int_{\bbr^2} \bigg[ \pa_t J^\e_R - \frac{\e^2}{2} \nabla \cdot ( D^\e\psi^\e \otimes \overline{D^\e\psi^\e} + \overline{D^\e\psi^\e} \otimes D^\e\psi^\e) \\
	&\hspace{2.5cm} - \nabla p(|\psi^\e|^2) + \frac{\e^2}{4} \Delta \nabla |\psi^\e|^2 - \frac{\e^{2+2\delta}}{4} \pa_t \pa_t \nabla |\psi^\e|^2 \bigg] \cdot u \, \d x \\
	&= -\frac{\d}{\d t} \int_{\bbr^2} J_R^\e \cdot u \, \d x + \int_{\bbr^2} J_R^\e \cdot \pa_t u \, \d x 
	- \int_{\bbr^2} \frac{\e^2}{2} ( D^\e\psi^\e \otimes \overline{D^\e\psi^\e} + \overline{D^\e\psi^\e} \otimes D^\e\psi^\e ) : \nabla u \, \d x \\
	&\qquad - \int_{\bbr^2} \frac{\e^2}{4} \nabla \rho^\e \cdot (\Delta u) \, \d x - \int_{\bbr^2} (\rho^\e)^\gamma (\nabla \cdot u) \, \d x \\
	&\qquad - \frac{\e^{2+2\delta}}{4} \frac{\d}{\d t} \int_{\bbr^2} \pa_t \rho^\e \nabla \cdot u \, \d x
	+ \frac{\e^{2+2\delta}}{4} \int_{\bbr^2} \pa_t \rho^\e \left( \nabla \cdot \pa_t u \right) \, \d x.
\end{align*}
On the other hand, $ \mathcal{I}_{112} $ can be estimated using the Euler equations \eqref{Euler-CS} as
\begin{align*}
	\mathcal{I}_{112} &= \int_{\bbr^2} J^\e \cdot \left( (u \cdot \nabla) u + \frac{\gamma}{\gamma-1} \nabla (\rho^{\gamma-1}) \right) \, \d x \\
	&= \int_{\bbr^2} J^\e \otimes u : \nabla u \, \d x + \int_{\bbr^2} \frac{\gamma}{\gamma-1} J^\e \cdot \nabla (\rho^{\gamma-1}) \, \d x.
\end{align*}
Combining the estimates for $ \mathcal{I}_{111} $ and $ \mathcal{I}_{112} $, we obtain the estimate for $ \mathcal{I}_{11} $ as
\begin{align}
	\begin{aligned}\label{I_31}
		\mathcal{I}_{11} &= \int_{\bbr^2} \left( J^\e \otimes u \right) : \nabla u \, \d x - \int_{\bbr^2} \frac{\e^2}{2} ( D^\e\psi^\e \otimes \overline{D^\e\psi^\e} + \overline{D^\e\psi^\e} \otimes D^\e\psi^\e ) : \nabla u \, \d x \\		
		&\quad - \int_{\bbr^2} \frac{\e^2}{4} \nabla \rho^\e \cdot (\Delta u) \, \d x - \int_{\bbr^2} (\rho^\e)^\gamma (\nabla \cdot u) \, \d x 
	+ \int_{\bbr^2} \frac{\gamma}{\gamma-1} J^\e \cdot \nabla (\rho^{\gamma-1}) \, \d x \\
		&\quad - \frac{\d}{\d t} \left[ \int_{\bbr^2} J_R^\e \cdot u \, \d x + \int_{\bbr^2} \frac{\e^{2+2\delta}}{4} \pa_t \rho^\e \nabla \cdot u \, \d x \right] \\
		&\quad + \int_{\bbr^2} J^\e_R \cdot \pa_t u \, \d x + \int_{\bbr^2} \frac{\e^{2+2\delta}}{4} \pa_t \rho^\e \left( \nabla \cdot \pa_t u \right) \, \d x.
	\end{aligned}
\end{align}

\vspace{0.2cm}
\noindent$\bullet$ (Estimate of $\mathcal{I}_{12}$): 
Similarly, we decompose $\mathcal{I}_{12}$ as 
\begin{align*}
	\mathcal{I}_{12}
	=\frac{\d}{\d t}\int_{\bbr^2} \frac{1}{2} \rho^\e|u|^2\,\d x = \int_{\bbr^2}\frac{1}{2}(\pa_t \rho^\e)|u|^2\,\d x +\int_{\bbr^2}\rho^\e u \cdot \pa_t u\,\d x =: \mathcal{I}_{121}+\mathcal{I}_{122}.
\end{align*}
To estimate $\mathcal{I}_{121}$, we use the mass conservation law from Proposition \ref{P2.1} (1) to obtain
\[
\mathcal{I}_{121} = \frac{\d}{\d t}\int_{\bbr^2}\frac{1}{2}\rho^\e_R|u|^2\,\d x - \int_{\bbr^2}\rho^\e_R u \cdot\pa_t u\,\d x + \int_{\bbr^2} u\otimes J^\e:\nabla u\,\d x.
\]
On the other hand, using the Euler equations \eqref{Euler-CS}, we estimate $\mathcal{I}_{122}$ as
\begin{align*}
	\mathcal{I}_{122} &= \int_{\bbr^2}\rho^\e u\cdot \left(-(u\cdot\nabla) u-\gamma\rho^{\gamma-2}\nabla\rho\right)\,\d x\\
	&=-\int_{\bbr^2}\left(\rho^\e u\otimes u\right):\nabla u \,\d x - \int_{\bbr^2}\gamma\rho^{\gamma-2} \rho^\e u \cdot\nabla \rho\,\d x.
\end{align*}
Therefore, combining the estimates for $\mathcal{I}_{121}$ and $\mathcal{I}_{122}$, we obtain
\begin{align}
	\begin{aligned}\label{I_32}
		\mathcal{I}_{12} &= \int_{\bbr^2} \left(u\otimes J^\e - \rho^\e u\otimes u\right):\nabla u \,\d x - \int_{\bbr^2} \gamma \rho^\e u \cdot\nabla \rho\,\d x\\
		&\quad+\frac{\d}{\d t}\int_{\bbr^2}\frac{1}{2}\rho^\e_R|u|^2\,\d x - \int_{\bbr^2}\rho^\e_R u\cdot\pa_t u\,\d x.
	\end{aligned}
\end{align}

\vspace{0.2cm}
\noindent $\bullet$ (Estimate of $\mathcal{I}_{13}$): Once again, applying the mass conservation law for $\rho^\e$ and \eqref{Euler-CS}$_1$ from the Euler equations, we obtain
	\begin{align}
	\begin{aligned}\label{I_33}
	\mathcal{I}_{13} &= \frac{\d}{\d t}\int_{\bbr^2}\frac{1}{2^{\gamma}}\rho^\gamma\,\d x -\frac{\d}{\d t}\int_{\bbr^2}\frac{\gamma}{\gamma-1}\rho^\e\rho^{\gamma-1}\,\d x\\
	&=\int_{\bbr^2}\gamma\rho^{\gamma-1}(\pa_t \rho)\,\d x -\int_{\bbr^2}\frac{\gamma}{\gamma-1}\left((\gamma-1)\rho^\e\rho^{\gamma-2}(\pa_t\rho)+\pa_t(\rho^\e)\rho^{\gamma-1}\right)\,\d x\\
	&=\int_{\bbr^2}\gamma\nabla(\rho^{\gamma-1})\cdot(\rho u)\,\d x +\int_{\bbr^2}\gamma\rho^{\gamma-2}\rho^\e\nabla\cdot(\rho u)\,\d x \\
	&\quad-\int_{\bbr^2}\frac{\gamma}{\gamma-1} \pa_t(\rho^\e_R)\rho^{\gamma-1}\,\d x-\int_{\bbr^2}\frac{\gamma}{\gamma-1} J^{\e} \cdot\nabla(\rho^{\gamma-1})\,\d x\\
	&=-\int_{\bbr^2}(\gamma-1)\rho^\gamma\nabla \cdot u\,\d x+\int_{\bbr^2}\gamma\rho^{\gamma-2} \rho^\e u\cdot \nabla\rho \,\d x+\int_{\bbr^2}\gamma\rho^{\gamma-1}\rho^\e\nabla\cdot u\,\d x \\
	&\quad-\int_{\bbr^2}\frac{\gamma}{\gamma-1}J^\e\cdot\nabla(\rho^{\gamma-1}) \,\d x-\frac{\d}{\d t}\int_{\bbr^2}\frac{\gamma}{\gamma-1} \rho^\e_R\rho^{\gamma-1}\,\d x +\int_{\bbr^2}\frac{\gamma}{\gamma-1} \rho_R^\e\pa_t(\rho^{\gamma-1})\,\d x.
	\end{aligned}
	\end{align}
	
\vspace{0.2cm}

\vspace{0.3cm}
Finally, summing \eqref{I_31}--\eqref{I_33} for $\mathcal{I}_{1\ell}$ with $\ell=1,2,3$, we observe that several terms cancel, and we obtain
\begin{align*}
	\frac{\d\mathcal{H}^\e(t)}{\d t}&= \int_{\bbr^2} \bigg[J^\e\otimes u+u\otimes J^\e-\rho^\e u\otimes u -\frac{\e^2}{2} 	(D^\e\psi^\e\otimes\overline{D^\e\psi^\e}+\overline{D^\e\psi^\e}\otimes D^\e\psi^\e)\bigg]:\nabla u\,\d x \\
	&\quad-\int_{\bbr^2}\frac{\e^2}{4}\nabla 
	\rho^\e \cdot(\Delta u) +\left((\rho^\e)^\gamma-\rho^\gamma-\gamma\rho^{\gamma-1}(\rho^\e-\rho)\right)(\nabla\cdot u)\,\d x\\
	&\quad+\int_{\bbr^2} J^\e_R\cdot \pa_t u+\frac{\e^{2+2\delta}}{4}\pa_t\rho^\e  \left(\nabla \cdot \pa_t u\right)-\rho^\e_R u\cdot\pa_t u+\frac{\gamma}{\gamma-1}\rho^\e_R\pa_t(\rho^{\gamma-1})\,\d x\\
	&\quad-\frac{\d}{\d t}\left[\int_{\bbr^2} J^\e_{R}\cdot u +\frac{\e^{2+2\delta}}{4}\pa_t\rho^\e \nabla \cdot u-\frac12\rho^\e_R|u|^2+\frac{\gamma}{\gamma-1}\rho^\e_R\rho^{\gamma-1}\,\d x \right].
\end{align*} 
Since the correction functional $\mathcal{R}^\e$ defined in \eqref{correction} exactly corresponds to the last term in the above equation, the proof is completed.

\end{proof}

We now present the proof of Proposition \ref{P3.1}.

\begin{proof}[Proof of Proposition \ref{P3.1}]
 It suffices to show that the right-hand side of \eqref{modulated-energy-est} can be bounded by $ C\mathcal{H}^\e+ C\e^{\min\{1,\delta\}} $. To this end, we decompose the right-hand side of \eqref{modulated-energy-est} into the sum of seven terms $\mathcal{I}_{2\ell}, \ell=1,2,\ldots,7 $. Each term is defined as follows:
\begin{align*}
	\mathcal{I}_{21}&:=-\int_{\bbr^2} \left[ \frac{\e^2}{2} (D^\e\psi^\e\otimes\overline{D^\e\psi^\e}+\overline{D^\e\psi^\e}\otimes D^\e\psi^\e)-J^\e\otimes u-u\otimes J^\e+\rho^\e u\otimes u\right]:\nabla u\,\d x, \\	
	\mathcal{I}_{22}&:= -\int_{\bbr^2}\frac{\e^2}{4}\nabla \rho^\e \cdot(\Delta u)\,\d x, \qquad
	\mathcal{I}_{23}:=-\int_{\bbr^2}\left((\rho^\e)^\gamma-\rho^\gamma-\gamma\rho^{\gamma-1}(\rho^\e-\rho)\right)(\nabla\cdot u)\,\d x,\\
	\mathcal{I}_{24}&:=\int_{\bbr^2} J_R^\e\cdot \pa_t u\,\d x, \qquad
	\mathcal{I}_{25}:=\int_{\bbr^2}\frac{\e^{2+2\delta}}{4}\pa_t\rho^\e(\nabla\cdot\pa_t u)\,\d x,\\
	\mathcal{I}_{26}&:=-\int_{\bbr^2}\rho^\e_R u\cdot\pa_t u\,\d x, \qquad
	\mathcal{I}_{27}:=\frac{\gamma}{\gamma-1}\int_{\bbr^2}\rho^\e_R\pa_t(\rho^{\gamma-1})\,\d x.
\end{align*}

In what follows, we estimate each term $\mathcal{I}_{2\ell}$ separately.

\noindent $\bullet$ (Estimate of $\mathcal{I}_{21}$):
First, we observe that
\begin{align*}
	\frac{1}{2}&\bigl((\e D^{\e}\psi^\e-\i u\psi^\e)\otimes\overline{\e D^{\e}\psi^\e-\i u\psi^\e}
	+\overline{\e D^{\e}\psi^\e-\i u\psi^\e}\otimes (\e D^{\e}\psi^\e-\i u\psi^\e)\bigr)\\
	&=\frac{\e^2}{2}\bigl(D^\e\psi^\e\otimes\overline{D^\e\psi^\e}+\overline{D^\e\psi^\e}\otimes D^\e\psi^\e\bigr)
	-J^\e\otimes u - u\otimes J^\e + \rho^\e u\otimes u.
\end{align*}
Hence,
\begin{align*}
	\mathcal{I}_{21}
	&= -\frac{1}{2}\int_{\bbr^2}\bigl((\e D^{\e}\psi^\e-\i u\psi^\e)\otimes\overline{\e D^{\e}\psi^\e-\i u\psi^\e}
	+\overline{\e D^{\e}\psi^\e-\i u\psi^\e}\otimes (\e D^{\e}\psi^\e-\i u\psi^\e)\bigr):\nabla u \,\d x\\
	&\le C\int_{\bbr^2}|(\e D^\e-\i u)\psi^\e|^2\,\d x
	\le C\mathcal{H}^\e,
\end{align*}
where we used the smoothness of $ u $ (in particular, the boundedness of $\|\nabla u\|_{L^\infty}$) in the last inequality.

\medskip

\noindent $\bullet$ (Estimate of $\mathcal{I}_{22}$): 
Since $\nabla \rho^\e=2\Re(\overline{\psi^\e}D^\e\psi^\e)$, we use the energy conservation in Proposition~\ref{P2.1}\,(3) together with the smoothness of $u$ to get
\[
\mathcal{I}_{22}
= -\int_{\bbr^2}\frac{\e^2}{2}\Re(\overline{\psi^\e}D^\e\psi^\e)\cdot(\Delta u)\,\d x
\le \frac{\e}{2} \|\e D^\e\psi^\e\|_{L^2}\|\psi^\e\|_{L^{2\gamma}}\|\Delta u\|_{L^{\frac{2\gamma}{\gamma-1}}}
\le C\e.
\]

\medskip

\noindent $\bullet$ (Estimate of $\mathcal{I}_{23}$): 
Since 
\[
p(\rho^\e|\rho)=(\rho^\e)^\gamma-\rho^\gamma-\gamma\rho^{\gamma-1}(\rho^\e-\rho)
\]
is nonnegative for any $\rho^\e$ and $\rho$, we have
\[
\mathcal{I}_{23}
=-\int_{\bbr^2}\bigl((\rho^\e)^\gamma-\rho^\gamma-\gamma\rho^{\gamma-1}(\rho^\e-\rho)\bigr)(\nabla\cdot u)\,\d x
\le C\int_{\bbr^2}p(\rho^\e|\rho)\,\d x
\le C\mathcal{H}^\e.
\]

\medskip

\noindent $\bullet$ (Estimate of $\mathcal{I}_{24}$): 
We use the definition 
\[
J^\e_R=\e^{2+2\delta}\Re(\overline{D^\e_t\psi^\e} D^{\e}\psi^\e )
\]
and the total energy conservation to obtain
\[
\int_{\bbr^2}|J^\e_R|\,\d x
\le \int_{\bbr^2}\e^{\delta}|\e^{1+\delta}D^\e_t\psi^\e||\e D^\e\psi^\e|\,\d x 
\le \e^\delta \|\e^{1+\delta} D^\e_t\psi^\e\|_{L^2} \|\e D^\e\psi^\e\|_{L^2}
\le  C\e^\delta.
\]
Hence,
\begin{equation}\label{I_46}
	\mathcal{I}_{24}
	\le \int_{\bbr^2}|J^\e_R||\pa_t u|\,\d x
	\le  C\e^\delta.
\end{equation}

\medskip

\noindent $\bullet$ (Estimate of $\mathcal{I}_{25}$): 
Noting that $\pa_t \rho^\e = 2\Re(\overline{\psi^\e} D^\e_t\psi^\e)$, we estimate $\mathcal{I}_{25}$ as 
\begin{align*}
	\mathcal{I}_{25}
	=\int_{\bbr^2}\frac{\e^{2+2\delta}}{2}\Re(\overline{\psi^\e} D^\e_t\psi^\e)(\nabla\cdot\pa_t u)\,\d x
	\le C\int_{\bbr^2}\e^{1+\delta}|\psi^\e||\e^{1+\delta} D^\e_t\psi^\e||\nabla\cdot\pa_t u|\,\d x.
\end{align*}
Using again the conservation of total energy, we obtain
\begin{equation}\label{I_47}
	\mathcal{I}_{25} 
	\le C\e^{1+\delta}\|\psi^\e\|_{L^{2\gamma}}\|\e^{1+\delta} D_t^\e\psi^\e\|_{L^2}\|\nabla\cdot\pa_t u\|_{L^\frac{2\gamma}{\gamma-1}}
	\le C\e^{1+\delta}.
\end{equation}
\medskip
\noindent $\bullet$ (Estimates of $\mathcal{I}_{26}$ and $\mathcal{I}_{27}$): 
Since $\rho^\e_R := \e^{1+2\delta}\Im(\overline{\psi^\e}D_t^\e\psi^\e)$ and $\rho, u$ are smooth, we estimate $\mathcal{I}_{27}$ as
\begin{align}
	\begin{aligned}\label{I_48}
		\mathcal{I}_{27}
		&\le C\int_{\bbr^2} |\rho^\e_R||\pa_t(\rho^{\gamma-1})|\,\d x
		\le C\e^{\delta}\int_{\bbr^2}|\psi^\e||\e^{1+\delta} D^\e_t\psi^\e||\pa_t(\rho^{\gamma-1})|\,\d x\\
		&\le C\e^{\delta}\|\psi^\e\|_{L^{2\gamma}}\|\e^{1+\delta} D^\e_t\psi^\e\|_{L^2}\|\pa_t\rho^{\gamma-1}\|_{L^{\frac{2\gamma}{\gamma-1}}}
		\le C\e^\delta.
	\end{aligned}
\end{align}
A similar argument yields
\[
\mathcal{I}_{26}\le C\e^\delta.
\]

Finally, combining all the above estimates for $\mathcal{I}_{2\ell}$, we conclude that
\[
\frac{\d}{\d t}\bigl(\mathcal{H}^\e(t)+\mathcal{R}^\e(t)\bigr)
\le C\mathcal{H}^\e(t) +C\e^{\min\{1,\delta\}},
\]
which completes the proof of Proposition~\ref{P3.1}.
\end{proof}

Our final goal in this section is to establish the following modulated energy estimate.

\begin{proposition}
	Assume $\gamma>1$. Let $(\psi^\e, A^\e_0, A^\e)$ be the global solution to the CSH system \eqref{CSH_parameter}, 
	and let $(\rho,u,A_0, A)$ be the local smooth solution to the Euler--CS system \eqref{Euler-CS} on $[0,T_*)$, 
	subject to the initial data $(\psi^\e_{\textup{in}},A^{\e}_{0,\textup{in}},A^{\e}_{\textup{in}})$ 
	and $(\rho_{\textup{in}},u_{\textup{in}},A_{0,\textup{in}},A_{\textup{in}})$ 
	satisfying assumptions $(\mathcal{C}1)$ and $(\mathcal{C}2)$, respectively. Then, for $0\leq t\leq T_*$, we have
	\[
	\mathcal{H}^\e(t)\le C\e^{\alpha},\quad\mbox{where}\quad \alpha = \min\{1,\delta,\lambda\}.
	\]
\end{proposition}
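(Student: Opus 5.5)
The plan is to integrate the differential inequality of Proposition~\ref{P3.1} in time and then control the correction functional $\mathcal{R}^\e$ pointwise in time by $C\e^{\min\{1,\delta\}}$. The proof then closes with Gronwall's lemma.

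Integrating \eqref{modulated_auxiliary_est} over $[0,t]$ gives
\[
\mathcal{H}^\e(t)\le \mathcal{H}^\e(0)+|\mathcal{R}^\e(0)|+|\mathcal{R}^\e(t)|+C\int_0^t\mathcal{H}^\e(s)\,\d s+CT_*\e^{\min\{1,\delta\}}.
\]
By $(\mathcal{C}1)$, $\mathcal{H}^\e(0)\le C\e^\lambda$. It therefore suffices to show $\sup_{0\le t\le T_*}|\mathcal{R}^\e(t)|\le C\e^{\min\{1,\delta\}}$. Gronwall's lemma on $[0,T_*]$ then yields $\mathcal{H}^\e(t)\le C(\e^\lambda+\e^{\min\{1,\delta\}})e^{CT_*}\le C\e^{\alpha}$ with $\alpha=\min\{1,\delta,\lambda\}$, for $\e\le1$.

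To bound $\mathcal{R}^\e$, I would treat its four terms as in the estimates of $\mathcal{I}_{24}$--$\mathcal{I}_{27}$, using energy conservation (Proposition~\ref{P2.1}(3)). Energy conservation gives uniform bounds on $\|\e^{1+\delta}D^\e_t\psi^\e\|_{L^2}$, $\|\e D^\e\psi^\e\|_{L^2}$ and $\|\psi^\e\|_{L^{2\gamma}}^{2\gamma}$ in terms of $\mathcal{E}^\e(0)$. The four terms are handled as follows.
\begin{itemize}
\item[(i)] $\bigl|\int J^\e_R\cdot u\bigr|\le \|u\|_{L^\infty}\|J^\e_R\|_{L^1}\le C\e^\delta$, as shown for $\mathcal{I}_{24}$.
\item[(ii)] Writing $\pa_t\rho^\e=2\Re(\overline{\psi^\e}D^\e_t\psi^\e)$,
\[
\Bigl|\frac{\e^{2+2\delta}}{4}\int\pa_t\rho^\e\,\nabla\cdot u\Bigr|\le C\e^{1+\delta}\|\psi^\e\|_{L^{2\gamma}}\|\e^{1+\delta}D^\e_t\psi^\e\|_{L^2}\|\nabla\cdot u\|_{L^{\frac{2\gamma}{\gamma-1}}}\le C\e^{1+\delta}.
\]
\item[(iii)] and (iv) Since $|\rho^\e_R|\le \e^\delta|\psi^\e||\e^{1+\delta}D^\e_t\psi^\e|$, the same H\"older pairing with the smooth weights $|u|^2$ and $\rho^{\gamma-1}$ gives $C\e^\delta$.
\end{itemize}
The weights $|u|^2$ and $\rho^{\gamma-1}$ lie in $L^{2\gamma/(\gamma-1)}$ because $\rho,u\in H^s$ with $s>3$ and $\rho\in L^1\cap L^\infty$. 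The same holds for $\nabla\cdot u$.

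The main obstacle is making sure these bounds hold uniformly in $\e$. This requires $\mathcal{E}^\e(0)$ to be bounded independently of $\e$, which I would deduce from $(\mathcal{C}1)$ via the identity \eqref{energy_modulated}. One has $\mathcal{E}^\e(0)=\mathcal{H}^\e(0)+\int J^\e\cdot u_{\rm in}-\frac12\int\rho^\e|u_{\rm in}|^2-\int(\rho_{\rm in}-\frac{\gamma}{\gamma-1}\rho^\e)\rho_{\rm in}^{\gamma-1}$. The cross terms are controlled using Young's inequality, $|J^\e\cdot u|\le \frac14|\e D^\e\psi^\e|^2+C\rho^\e|u|^2$. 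One also needs $\int\rho^\e(1+|u_{\rm in}|^2+\rho_{\rm in}^{\gamma-1})$ bounded. This follows from the relative-pressure bound $\int p(\rho^\e_{\rm in}|\rho_{\rm in})\le C$ together with a bound on $\int\rho^\e_{\rm in}$, where $\rho^\e_{\rm in}=|\psi^\e_{\rm in}|^2$. If that mass bound is not immediate, I would simply take uniformly bounded initial mass and energy as part of the well-prepared data, which is standard and implicitly used in the estimates of Proposition~\ref{P3.1}.
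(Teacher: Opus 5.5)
Your proposal is correct and follows the paper's proof essentially step for step: integrate the inequality of Proposition~\ref{P3.1}, bound $|\mathcal{R}^\e(t)|\le C\e^\delta$ term by term as in \eqref{I_46}--\eqref{I_48}, and close with Gr\"onwall and $(\mathcal{C}1)$. Your discussion of the $\e$-uniform energy bound covers a point the paper leaves implicit, and no separate mass bound is needed there: the inequality $p(n|\rho)\ge \frac12 n^\gamma - C\rho^\gamma$ together with $(\mathcal{C}1)$ already gives $\|\rho^\e_{\textup{in}}\|_{L^\gamma}\le C$, and by H\"older this controls $\int\rho^\e_{\textup{in}}\bigl(|u_{\textup{in}}|^2+\rho_{\textup{in}}^{\gamma-1}\bigr)\,\d x$.
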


\begin{proof}
	We integrate \eqref{modulated_auxiliary_est} over $[0,t]$ with $t\le T_*$ to obtain
	\[
	\mathcal{H}^\e(t)
	\le \mathcal{H}^\e(0) - \bigl(\mathcal{R}^\e(t)-\mathcal{R}^\e(0)\bigr)
	+ C\int_0^t \mathcal{H}^\e(s)\,\d s
	+ C\e^{\min\{1,\delta\}}.
	\]
Next, recall that the relativistic correction functional $\mathcal{R}^\e$ defined in \eqref{correction} is given by
\[
\mathcal{R}^\e(t)
= \int_{\bbr^2} J^\e_R \cdot u \,\d x
+ \int_{\bbr^2}\frac{\e^{2+2\delta}}{4}\pa_t \rho^\e \nabla \cdot u \,\d x
- \int_{\bbr^2}\frac{1}{2}\rho^\e_R|u|^2\,\d x
+ \frac{\gamma}{\gamma-1}\int_{\bbr^2}\rho^\e_R\rho^{\gamma-1}\,\d x.
\]
Note that each term in $\mathcal{R}^\e$ can be bounded by $C\e^{\delta}$, as shown in \eqref{I_46}, \eqref{I_47}, and \eqref{I_48}. For instance, the first term satisfies

\[
\Bigl|\int_{\bbr^2}J^\e_R\cdot u\,\d x\Bigr|
\le \int_{\bbr^2}|J^\e_R||u|\,\d x
\le C\e^\delta,
\]
and the remaining terms can be treated similarly. Hence,
\[\left|\mathcal{R}^\e(t)\right|\le C\e^\delta,\qquad 0\leq t\le T_*.\]
Therefore,
\[
\mathcal{H}^\e(t)
\le \mathcal{H}^\e(0)
+ C\int_0^t \mathcal{H}^\e(s)\,\d s
+ C\e^{\min\{1,\delta\}},\quad 0\le t\le T_*.
\]
Finally, using Gr\"onwall's inequality and the well-prepared initial data condition $(\mathcal{C}1)$, we obtain
\[
\mathcal{H}^\e(t)\le C\e^{\min\{1,\delta,\lambda\}},\quad 0\le t\le T_*,
\]
which is the desired estimate.

\end{proof}

\section{Quantitative hydrodynamic limits of the CSH system}\label{sec:4}
\setcounter{equation}{0}

In this section, we derive the quantitative hydrodynamic limits of the CSH system based on the modulated energy estimates established in the previous section,  thereby completing the proof of Theorem~\ref{Thm:main}. We begin by citing, without proof, a technical lemma that will be used to establish the convergence of the density.

\begin{lemma}\label{L4.1}\cite{CJ21,KM22,LM98}
	Let $\gamma > 1$, and let $\rho$ be a function on $\mathbb{R}^{1+2}$. 	Define the relative pressure functional by
	\[
	p(n|\rho) = n^\gamma - \rho^\gamma - \gamma \rho^{\gamma-1}(n- \rho).
	\]
Then, the following statements hold.
	\begin{enumerate}
		\item  If $ \gamma \ge 2 $, then
		\[|n - \rho|^\gamma \le p(n|\rho).\]
		\item If there exist two positive constants $\underline{\rho}$ and $\overline{\rho}$ such that
			\[
		0 < \underline{\rho} < \rho(t,x) < \overline{\rho}, \qquad \text{for all }\,\, (t,x) \in [0,T_*) \times K,
		\]
		for some $K\subset \R^2$, then
		\begin{align*}
			p(n|\rho)
			&\geq \gamma(\gamma - 1)\min\{n^{\gamma - 2},\, \rho^{\gamma - 2}\}(n - \rho)^2\\
			&\geq C
			\begin{cases}
				(n - \rho)^2,& \text{if } \,\,\dfrac{\rho}{2} \le n \le 2\rho,\\
				1 + n^\gamma, & \text{otherwise},
			\end{cases}
		\end{align*}
		where $ C = C(\underline{\rho}, \overline{\rho}, \gamma) > 0 $. 
	\end{enumerate}
\end{lemma}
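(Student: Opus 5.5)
Part (1) and part (2) of Lemma~\ref{L4.1} are both pointwise inequalities in the two nonnegative scalars $n$ and $\rho$. I would prove them by elementary calculus. The starting point is the Taylor formula with integral remainder for the convex function $f(s)=s^\gamma$:
\[
p(n|\rho)=f(n)-f(\rho)-f'(\rho)(n-\rho)=\gamma(\gamma-1)(n-\rho)^2\int_0^1(1-\theta)\bigl(\rho+\theta(n-\rho)\bigr)^{\gamma-2}\,\d\theta .
\]

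For the first inequality in (2), I bound the factor $(\rho+\theta(n-\rho))^{\gamma-2}$ from below on the segment between $n$ and $\rho$. When $\gamma\ge2$ this function of $\theta$ is monotone, and when $\gamma<2$ it is also monotone. In either case it is bounded below by $\min\{n^{\gamma-2},\rho^{\gamma-2}\}$. Since $\int_0^1(1-\theta)\,\d\theta=\tfrac12$, this gives $p(n|\rho)\ge \tfrac{\gamma(\gamma-1)}{2}\min\{n^{\gamma-2},\rho^{\gamma-2}\}(n-\rho)^2$. This is the stated bound up to the harmless constant $\tfrac12$, which is absorbed into the later generic constant.

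For the second inequality in (2), I split into regimes using $\underline\rho<\rho<\overline\rho$ on $K$.
\begin{enumerate}
\item If $\rho/2\le n\le2\rho$, then $n$ and $\rho$ both lie in $[\underline\rho/2,2\overline\rho]$. So $\min\{n^{\gamma-2},\rho^{\gamma-2}\}$ is bounded below by a constant depending on $\underline\rho,\overline\rho,\gamma$, and we get $C(n-\rho)^2$.
\item If $n<\rho/2$, then by convexity $p(\cdot|\rho)$ is decreasing on $[0,\rho]$. Hence $p(n|\rho)\ge p(\rho/2|\rho)=\rho^\gamma\,p(1/2|1)\ge c\,\underline\rho^\gamma$. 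Since $n^\gamma\le\overline\rho^\gamma$ here, this is $\ge C(1+n^\gamma)$.
\item If $n>2\rho$, the main point is the growth at infinity. The function $g(n)=p(n|\rho)/(1+n^\gamma)$ is continuous and positive on $[2\rho,\infty)$, and $g(n)\to1$ as $n\to\infty$. I would make this uniform in $\rho\in[\underline\rho,\overline\rho]$ by scaling: write $n=\rho s$ with $s\ge2$. Then $p(n|\rho)=\rho^\gamma h(s)$ with $h(s)=s^\gamma-1-\gamma(s-1)>0$, and $h(s)/(\rho^{-\gamma}+s^\gamma)$ is bounded below by a positive constant on $[2,\infty)$, uniformly for $\rho^{-\gamma}\le\underline\rho^{-\gamma}$.
\end{enumerate}

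For (1), with $\gamma\ge2$, I use homogeneity to reduce to $\rho=1$ (the case $\rho=0$ is trivial). It then suffices to show $\varphi(s):=s^\gamma-1-\gamma(s-1)-|s-1|^\gamma\ge0$ for $s\ge0$. Note $\varphi(1)=\varphi'(1)=0$. For $s\ge1$ I compare second derivatives: $\varphi''(s)=\gamma(\gamma-1)(s^{\gamma-2}-(s-1)^{\gamma-2})\ge0$ because $\gamma\ge2$, which gives $\varphi\ge0$ there. For $0\le s<1$ the same computation uses $\varphi''=\gamma(\gamma-1)(s^{\gamma-2}-(1-s)^{\gamma-2})$, and this sign fails near $s=0$. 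So instead I use $\varphi(0)=\gamma-2\ge0$ together with the convexity of $t\mapsto t^\gamma$, or argue via $h(s)\ge(1-s)^\gamma$ directly through the integral remainder. The remainder gives $h(s)=\gamma(\gamma-1)(1-s)^2\int_0^1(1-\theta)(1-\theta(1-s))^{\gamma-2}\,\d\theta$. The substitution $r=1-s$ and the bound $(1-\theta r)^{\gamma-2}\ge(r-\theta r)^{\gamma-2}=r^{\gamma-2}(1-\theta)^{\gamma-2}$ then give $h\ge\gamma(\gamma-1)r^\gamma\int_0^1(1-\theta)^{\gamma-1}\,\d\theta=(\gamma-1)r^\gamma$. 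This is at least $r^\gamma$ only when $\gamma\ge2$, which matches the hypothesis. The same trick with $\rho=1$, $n=1+r$, $r\ge0$ and $(1+\theta r)^{\gamma-2}\ge(\theta r)^{\gamma-2}$ handles $s>1$ and gives the constant $\gamma(\gamma-1)B(\gamma-1,2)=\gamma$... though that should be double-checked. I expect this sharp-constant verification in (1) to be the main obstacle; the fallback is the second-derivative comparison on $s\ge1$.
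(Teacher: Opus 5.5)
Your proof is correct. The paper gives no proof of this lemma (it is quoted from \cite{CJ21,KM22,LM98}), so your Taylor-remainder-plus-scaling argument is a valid self-contained replacement. Three remarks will tighten it.

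First, the step you flagged is not an obstacle. We have $\int_0^1\theta^{\gamma-2}(1-\theta)\,\d\theta=\frac{1}{\gamma-1}-\frac{1}{\gamma}=\frac{1}{\gamma(\gamma-1)}$, so the constant is $\gamma(\gamma-1)B(\gamma-1,2)=1$, not $\gamma$. This gives exactly $h(1+r)\ge r^\gamma$ for $r>0$; here $\gamma\ge2$ is what justifies $(1+\theta r)^{\gamma-2}\ge(\theta r)^{\gamma-2}$. Combined with your bound $h(1-r)\ge(\gamma-1)r^\gamma\ge r^\gamma$ for $0<r\le1$, part (1) is complete. The remark about $\varphi(0)=\gamma-2$ can be dropped. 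If you want to keep it, note that for $\gamma>2$ the function $\varphi$ is concave on $[0,\tfrac12]$ and convex on $[\tfrac12,1]$. Then $\varphi(1)=\varphi'(1)=0$ gives $\varphi\ge0$ on $[\tfrac12,1]$, and $\varphi(0)\ge0$ with concavity handles $[0,\tfrac12]$.

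Second, the factor $\tfrac12$ in the first inequality of (2) is not merely harmless but forced, because the inequality as printed is false. For $\gamma=2$ one has $p(n|\rho)=(n-\rho)^2<2(n-\rho)^2$ whenever $n\neq\rho$. For any $\gamma$, the ratio of $p(n|\rho)$ to the printed right-hand side tends to $\tfrac12$ as $n\to\rho$.

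Third, bounding $p(n|\rho)$ itself, rather than the intermediate quantity, when $n\notin[\rho/2,2\rho]$ is essential for $\gamma>2$. In that range $\min\{n^{\gamma-2},\rho^{\gamma-2}\}(n-\rho)^2$ grows only like $\rho^{\gamma-2}n^2$ as $n\to\infty$ and tends to $0$ as $n\to0$, so it cannot dominate $C(1+n^\gamma)$. Hence the second link of the printed chain fails for $\gamma>2$. Your direct estimate $p(n|\rho)\ge C(1+n^\gamma)$, via monotonicity on $[0,\rho]$ and the scaling $n=\rho s$, holds for every $\gamma>1$. The printed chain, with the $\tfrac12$ inserted, is valid only for $1<\gamma\le2$, which is the range in which the paper actually uses it.
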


\
\\
We also note that the first term in the modulated energy $\mathcal{H}^\e$ can be expressed in terms of hydrodynamic quantities as
\begin{align*}
	\int_{\bbr^2}\frac{1}{2}|(\e D^\e - \i u)\psi^\e|^2\,\d x 
	= \int_{\bbr^2}\frac{1}{2}\rho^\e|u^\e - u|^2\,\d x
	+ \int_{\bbr^2}\frac{\e^2}{2}|\nabla\sqrt{\rho^\e}|^2\,\d x,
\end{align*}
which implies
\begin{align}\label{4.1}
	\|\sqrt{\rho^\e}\,|u^\e - u|\|_{L^2(\bbr^2)} \le (\mathcal{H}^\e)^{\frac12}.
\end{align}
This observation will be used to handle the convergence of the momentum.

\begin{proof}[Proof of Theorem~\ref{Thm:main}]

We split the proof into two cases depending on the range of $\gamma$.

\medskip
\noindent $\bullet$ (Case of $\gamma \geq 2$): We first consider the case where $\gamma \ge 2$.

\medskip
\noindent $\diamond$ (Convergence of $\rho^\e$): By Lemma~\ref{L4.1}, we obtain
\[
\int_{\bbr^2} |\rho^\e - \rho|^\gamma\,\d x \le \int_{\bbr^2} p(\rho^\e|\rho)\,\d x \le C\mathcal{H}^\e \le C\e^\alpha,
\]
which yields
\[
\|\rho^\e - \rho\|_{L^\gamma(\bbr^2)} \le C\e^{\frac{\alpha}{\gamma}}.
\]

\medskip
\noindent $\diamond$ (Convergence of $J^\e$ and $\sqrt{\rho^\e}u^\e$): For $p = \frac{2\gamma}{\gamma+1}$, we apply H\"older's inequality to estimate
\begin{align*}
	\|J^\e - \rho u\|_{L^p(\bbr^2)} &\le \|\rho^\e(u^\e - u)\|_{L^p(\bbr^2)} + \|(\rho^\e - \rho)u\|_{L^p(\bbr^2)} \\
	&\le \|\sqrt{\rho^\e}\|_{L^{2\gamma}(\bbr^2)} \|\sqrt{\rho^\e}|u^\e - u|\|_{L^2(\bbr^2)} + \|\rho^\e - \rho\|_{L^\gamma(\bbr^2)} \|u\|_{L^{\frac{2\gamma}{\gamma-1}}(\bbr^2)} \\
	&\le C\|\sqrt{\rho^\e}|u^\e - u|\|_{L^2(\bbr^2)} + C\|\rho^\e - \rho\|_{L^\gamma(\bbr^2)} \\
	&\le C(\mathcal{H}^\e)^{\frac12} + C\e^{\frac{\alpha}{\gamma}} \\
	&\le C\e^{\frac{\alpha}{2}} + C\e^{\frac{\alpha}{\gamma}} \le C\e^{\frac{\alpha}{\gamma}},
\end{align*}
where we used the bound from \eqref{4.1}, the smoothness of $ u $, and the boundedness of $ \rho^\e $ in $ L^\gamma(\bbr^2) $.

Similarly, we estimate
\begin{align*}
	\|\sqrt{\rho^\e}u^\e-\sqrt{\rho}u\|_{L^2(\bbr^2)} &\le \|\sqrt{\rho^\e}|u^\e-u|\|_{L^2(\bbr^2)} +\|(\sqrt{\rho^\e}-\sqrt{\rho})u\|_{L^2(\bbr^2)} \\
	&\le C(\mathcal{H}^\e)^{\frac12} + \|\sqrt{\rho^\e}-\sqrt{\rho}\|_{L^{2\gamma}(\bbr^2)}\|u\|_{L^{\frac{2\gamma}{\gamma-1}}(\bbr^2)} \\
	&\le C(\mathcal{H}^\e)^{\frac12} + C\|\rho^\e - \rho\|_{L^\gamma(\bbr^2)}^{\frac12} \le C\e^{\frac{\alpha}{2\gamma}}.
\end{align*}

\medskip
\noindent $\diamond$ (Vanishing of $\rho^\e_R$ and $J^\e_R$): We first show that $J^\e_R$ vanishes in $L^1(\bbr^2)$ by the estimate
\[
\int_{\bbr^2}|J^\e_R|\,\d x \le \e^\delta \|\e^{1+\delta} D^\e_t\psi^\e\|_{L^2(\bbr^2)} \|\e D^\e\psi^\e\|_{L^2(\bbr^2)} \le C\e^\delta.
\]
For $\rho^\e_R$, we note that for any $1 < p < 2$, it holds that
\begin{align*}
	\int_{\bbr^2}|\rho^\e_R|^p\,\d x 
	&= \int_{\bbr^2} \e^{\delta p} |\psi^\e|^p |\e^{1+\delta} D^\e_t\psi^\e|^p\,\d x \\
	&\le \e^{\delta p} \|\rho^\e\|_{L^{\frac{p}{2-p}}(\bbr^2)}^{\frac{p}{2}} \|\e^{1+\delta} D^\e_t\psi^\e\|_{L^2(\bbr^2)}^p \\
	&\le C\e^{\delta p} \|\rho^\e\|_{L^{\frac{p}{2-p}}(\bbr^2)}^{\frac{p}{2}}.
\end{align*}
Choosing $p = \frac{2\gamma}{\gamma+1}$, we obtain
\[
\|\rho^\e_R\|_{L^{\frac{2\gamma}{\gamma+1}}(\bbr^2)} \le C\e^\delta \|\rho^\e\|_{L^\gamma(\bbr^2)}^{\frac12} \le C\e^\delta.
\]

\medskip
\noindent $\diamond$ (Convergence of $A^{\e}_0$ and $\e^{\delta} A^\e$): 
Recall that the gauge equations read
\[
\pa_t (\e^{\delta}A^{\e}) - \nabla A^\e_0 = (\rho^\e u^\e)^{\perp}, 
\quad 
\pa_t A - \nabla A_0 = (\rho u)^{\perp},
\]
which yield
\[
-\Delta A_0^\e = \nabla\cdot(\rho^\e u^\e)^{\perp}, 
\quad 
-\Delta A_0 = \nabla\cdot(\rho u)^{\perp},
\]
under the Coulomb gauge condition $\nabla \cdot A^\e = 0$ and $\nabla \cdot A = 0$.
Using the Hardy--Littlewood--Sobolev inequality, we obtain
\begin{align*}
	\|A_0^\e - A_0\|_{L^{2\gamma}(\R^2)} 
	&\le C\|\rho^\e u^\e - \rho u\|_{L^{\frac{2\gamma}{\gamma+1}}(\R^2)} 
	\le C\e^{\frac{\alpha}{\gamma}}, \\
	\|\nabla(A_0^\e - A_0)\|_{L^{\frac{2\gamma}{\gamma+1}}(\R^2)} 
	&\le C\|\rho^\e u^\e - \rho u\|_{L^{\frac{2\gamma}{\gamma+1}}(\R^2)}
	\le C\e^{\frac{\alpha}{\gamma}}.
\end{align*}
For the convergence of $\e^{\delta} A^\e$, we note that 
$\e^\delta A^\e - A$ satisfies
\[
\pa_t(\e^\delta A^\e - A) 
- \nabla(A^\e_0 - A_0) 
= (\rho^\e u^\e)^{\perp} - (\rho u)^{\perp},
\]
which implies
\begin{align*}
	\|\e^\delta A^\e - A\|_{L^{\frac{2\gamma}{\gamma+1}}(\R^2)}
	&\le \|\e^\delta A^\e_{\textup{in}} - A_{\textup{in}}\|_{L^{\frac{2\gamma}{\gamma+1}}(\R^2)} \\
	&\quad + \int_0^t \Bigl(\|\nabla(A_0^\e - A_0)\|_{L^{\frac{2\gamma}{\gamma+1}}(\R^2)} 
	+ \|\rho^\e u^\e - \rho u\|_{L^{\frac{2\gamma}{\gamma+1}}(\R^2)}\Bigr)\,\d\tau \\
	&\le C\e^\alpha + C T_*\,\e^{\frac{\alpha}{\gamma}} \le C\e^{\frac{\alpha}{\gamma}}.
\end{align*}

\medskip
To sum up, we obtain the following quantitative hydrodynamic limit estimates for the CSH system when $\gamma \ge 2$:
\begin{align*}
	&\|\rho^\e - \rho\|_{L^\gamma(\bbr^2)} \le C\e^{\frac{\alpha}{\gamma}},\quad
	\|J^\e - \rho u\|_{L^{\frac{2\gamma}{\gamma+1}}(\R^2)} \le C\e^{\frac{\alpha}{\gamma}},\quad
	\|\sqrt{\rho^\e} u^\e - \sqrt{\rho}u\|_{L^2(\bbr^2)} \le C\e^{\frac{\alpha}{2\gamma}},\\
	&\|J^\e_R\|_{L^1(\bbr^2)} \le C\e^\delta,\quad
	\|\rho^\e_R\|_{L^{\frac{2\gamma}{\gamma+1}}(\bbr^2)} \le C\e^\delta,\\
	&	\|A_0^\e - A_0\|_{L^{2\gamma}(\R^2)} \le C\e^{\frac{\alpha}{\gamma}}, \quad
	\|\nabla A_0^\e - \nabla A_0 \|_{L^{\frac{2\gamma}{\gamma+1}}(\R^2)}
	\le C\e^{\frac{\alpha}{\gamma}},\quad	\|\e^\delta A^\e - A\|_{L^{\frac{2\gamma}{\gamma+1}}(\R^2)}\le C\e^{\frac{\alpha}{\gamma}}.
\end{align*}
This verifies the first part of Theorem~\ref{Thm:main}.

\vspace{0.3cm}

\noindent $\bullet$ (Case of $1<\gamma<2$): Let $K \subset \mathbb{R}^2$ be any compact subset. Since $\rho$ is smooth on $[0,T_*)\times K$, there exist positive  $\underline{\rho}$ and $\overline{\rho}$ such that
\[
0 < \underline{\rho} \le \rho(t,x) \le \overline{\rho},
\quad \text{for all } 0 \le t < T_*,\; x\in K.
\]
We begin by splitting the local $L^\gamma$-norm of $\rho^\varepsilon - \rho$ over $K$ into two parts:
\begin{align*}
	\int_{K}|\rho^\e - \rho|^\gamma\,\d x
	= \int_{K \cap \{\tfrac{\rho}{2}\le \rho^\e \le 2\rho\}}|\rho^\e - \rho|^\gamma\,\d x
	+ \int_{K \cap \{\tfrac{\rho}{2}\le \rho^\e \le 2\rho\}^c}|\rho^\e - \rho|^\gamma\,\d x
	=: \mathcal{I}_{31} + \mathcal{I}_{32}.
\end{align*}
To estimate $\mathcal{I}_{31}$, we note that on the set
$\{\tfrac{\rho}{2}\le \rho^\varepsilon \le 2\rho\}$, both $\rho$ and $\rho^\varepsilon$ are comparably bounded. By Lemma~\ref{L4.1} and Hölder’s inequality, we obtain
\begin{align*}
	\mathcal{I}_{31}&=\int_{K\cap \{\tfrac{\rho}{2}\leq \rho^\e\leq 2\rho\}}	\min\bigl\{(\rho^\e)^{\frac{\gamma(\gamma-2)}{2}},\rho^{\frac{\gamma(\gamma-2)}{2}}\bigr\}|\rho^\e-\rho|^{\gamma}\max\bigl\{(\rho^\e)^{\frac{\gamma(2-\gamma)}{2}},\rho^{\frac{\gamma(2-\gamma)}{2}}\bigr\}\,\d x\\
	&\le \Bigl(\int_{K \cap \{\tfrac{\rho}{2}\le \rho^\e \le 2\rho\}}
	\min\{(\rho^\e)^{\gamma-2},\rho^{\gamma-2}\}\,|\rho^\e - \rho|^2\,\d x\Bigr)^{\frac{\gamma}{2}}
	\Bigl(\int_{K \cap \{\tfrac{\rho}{2}\le \rho^\e \le 2\rho\}}
	\max\{(\rho^\e)^\gamma,\rho^\gamma\}\,\d x\Bigr)^{\frac{2-\gamma}{2}}\\
	&\le C\Bigl(\int_{\bbr^2} p(\rho^\e|\rho)\,\d x\Bigr)^{\frac{\gamma}{2}}
	\;\le\; C(\mathcal{H}^\e)^{\frac{\gamma}{2}}.
\end{align*}
Next, we again invoke Lemma~\ref{L4.1} and observe that on the complement 
$\{\tfrac{\rho}{2}\le \rho^\varepsilon \le 2\rho\}^c$, either 
$\rho^\varepsilon < \tfrac{\rho}{2}$ or $\rho^\varepsilon > 2\rho$, to estimate
\begin{align*}
	\mathcal{I}_{32}
	\le C \int_{K \cap \{\tfrac{\rho}{2}\le\rho^\e\le2\rho\}^c} \bigl(1 + (\rho^\e)^\gamma\bigr)\,\d x
	&\le C \int_{\bbr^2} p(\rho^\e|\rho)\,\d x
	\;\le\; C\mathcal{H}^\e.
\end{align*}

Combining the estimates of $\mathcal{I}_{31}$ and $\mathcal{I}_{32}$, we have
\[
\int_{K}|\rho^\e-\rho|^\gamma\,\d x
\;\le\;
C(\mathcal{H}^\e)^{\frac{\gamma}{2}} + C\mathcal{H}^\e
\;\le\;
C(\mathcal{H}^\e)^{\frac{\gamma}{2}}
\;\le\;
C\e^{\frac{\alpha\gamma}{2}}.
\]
Hence, for $1<\gamma<2$, we obtain
\[
\|\rho^\e - \rho\|_{L^\gamma(K)} \;\le\; C\,\e^{\frac{\alpha}{2}}
\quad \text{for any compact subset } K \subset \bbr^2,
\]
i.e., the convergence of the density holds locally. The remaining estimates for $J^\e$, $\sqrt{\rho^\e}u^\e$, $\rho^\e_R$, and $J^\e_R$ can be obtained in the same way as in the case $\gamma \ge 2$, except that the corresponding bounds are now local whenever they rely on the density convergence.
 To be more specific, we have the following convergences: for any compact subset $K$ of $\bbr^2$,
\begin{align*}
	&\|J^\e-\rho u\|_{L^{\frac{2\gamma}{\gamma+1}}(K)}\le C\e^{\frac{\alpha}{2}},\quad \|\sqrt{\rho^\e}u^\e-\sqrt{\rho}u\|_{L^2(K)}\le C\e^{\frac{\alpha}{2}},\\
	&\|J^\e_R\|_{L^1(\bbr^2)}\le C\e^\delta,\quad \|\rho^\e_R\|_{L^\frac{2\gamma}{\gamma+1}(\bbr^2)}\le C\e^{\delta},\\
	&\|A_0^\e - A_0\|_{L^{2\gamma}(K)} \le C\e^{\frac{\alpha}{2}}, \quad
	\|\nabla A_0^\e - \nabla A_0 \|_{L^{\frac{2\gamma}{\gamma+1}}(K)}
	\le C\e^{\frac{\alpha}{2}},\quad	\|\e^\delta A^\e - A\|_{L^{\frac{2\gamma}{\gamma+1}}(K)}\le C\e^{\frac{\alpha}{2}},
\end{align*}
which completes the proof of the second part of Theorem \ref{Thm:main}. \end{proof}

\section{Conclusion}\label{sec:5}

	In this work, we have investigated the simultaneous non-relativistic and semi-classical limit of the CSH system and rigorously justified its convergence toward the Euler--CS system with explicit rates. 
	Our result provides a direct hydrodynamic limit from the relativistic CSH system to the Euler--CS system through a single scaling, thereby unifying the previously studied non-relativistic limit from CSH to CSS and the semi-classical limit from CSS to Euler--CS. 
	The analysis is based on a modulated energy framework, which allows us to quantitatively measure the distance between the modulated CSH system and its hydrodynamic limit. 
	This approach yields stability estimates leading to the convergence of density, momentum, and gauge fields, as well as the vanishing of relativistic correction terms in the limit.
	
	Several natural directions for further investigation arise from the present work. 
	In particular, it would be of interest to explore the hydrodynamic limit in the absence of the self-interaction potential $V$, or under different interaction potentials. 
	In the present analysis, the nonlinear potential is closely related to the pressure structure and plays an important role in the convergence of the density. 
	Understanding whether analogous hydrodynamic limits can be obtained without relying on this structure, especially in the relativistic setting, remains an interesting direction for future research. 
	Another possible direction concerns semi-classical limits toward relativistic hydrodynamic models. 
	While the present work focuses on convergence toward the classical Euler--CS system, it is natural to ask whether suitable scalings and appropriate reformulations may lead, in the semi-classical regime, to relativistic quantum hydrodynamic systems and their corresponding classical limits. 
	These problems are left for future work.

\appendix

\section{Proof of Proposition \ref{P2.1}}\label{Appen:A}
In this section, we provide a detailed proof of Proposition \ref{P2.1}.

\begin{proof}
	We begin by recalling the CSH system \eqref{CSH_parameter}:
	\begin{align}\label{A.1}
		\begin{aligned}
			&\i \e D^\e_t\psi^\e-\frac{\e^{2+2\delta}}{2}D^\e_tD^\e_t \psi^\e +\frac{\e^2}{2} \left(D^\e_1D^\e_1\psi^\e+D^\e_2D^\e_2\psi^\e\right) - V'\left(|\psi^\e|^2\right)\psi^\e=0,\\
			&\e^{\delta}\partial_tA^\e_1-\partial_1 A^\e_0=-\e\Im(\overline{\psi^\e}D^\e_2\psi^\e),\quad
			\e^{\delta}\partial_tA^\e_2-\partial_2 A^\e_0=\e\Im(\overline{\psi^\e}D^\e_1\psi^\e),\\
			&\e^{\delta}(\partial_1A^\e_2-\partial_2 A^\e_1)=-|\psi^\e|^2 + \e^{1+2\delta}\Im\left(\overline{\psi^\e} D^\e_t\psi^\e\right).
		\end{aligned}
	\end{align} 
	
	\noindent $\bullet$ (Mass conservation):  
	To derive the mass conservation law, we multiply $\eqref{A.1}_1$ by $\overline{\psi^\e}$ and take the imaginary part, yielding  
	\begin{equation*}
		\e\pa_t|\psi^\e|^2- \e^{2+2\delta}\Im(\overline{\psi^\e}D^\e_tD^\e_t\psi^\e) + \e^2\Im\left(\overline{\psi^\e} (D^\e_1D^\e_1\psi^\e+D^\e_2D^\e_2\psi^\e) \right)=0.
	\end{equation*}
	To rewrite the above expression in divergence form, we use the identities
	\begin{align*}
		\pa_t (\overline{\psi^\e}D^\e_t\psi^\e) &= \overline{D^\e_t\psi^\e}D^\e_t \psi^\e+\overline{\psi^\e} D^\e_tD^\e_t \psi^\e,\\
		\pa_j (\overline{\psi^\e}D^\e_j\psi^\e) &= \overline{D^\e_j\psi^\e}D^\e_j \psi^\e+\overline{\psi^\e} D^\e_jD^\e_j \psi^\e,\qquad\mbox{for}\quad j=1,2.
	\end{align*}
	Applying these relations, we obtain:
	\[
	\pa_t\left(|\psi^\e|^2-\e^{1+2\delta}\Im(\overline{\psi^\e}D^\e_t\psi^\e)\right) +\nabla\cdot\left(\e\Im\left(\overline{\psi^\e}D^\e\psi^\e\right)\right)=0.
	\]
This proves the mass conservation law.

\vspace{0.2cm}
\noindent $\bullet$ (Momentum conservation): We multiply $\eqref{A.1}_1$ by $ D^\e\psi^\e $ and take the real part, which gives
\begin{align}\label{A.2.1}
	\begin{aligned}
		&\e\Im(\overline{D^\e_t\psi^\e}D^\e\psi^\e)-\frac{\e^{2+2\delta}}{2}\Re(\overline{D^\e_tD^\e_t \psi^\e} D^\e\psi^\e)\\
		&\quad+\frac{\e^2}{2}\Re\left((\overline{D^\e_1D^\e_1\psi^\e}+\overline{D^\e_2D^\e_2\psi^\e}){D^\e \psi^\e}\right)-V'\left(|\psi^\e|^2\right)\Re(\overline{\psi^\e} D^\e\psi^\e)=0.
	\end{aligned}
\end{align} 

Next, applying $ D^\e $ to $\eqref{A.1}_1$ and multiplying by $ \overline{\psi^\e} $, we obtain
\begin{align}\label{A.2}
	\begin{aligned}
		&\i\e\overline{\psi^\e}D^\e D^\e_t\psi^\e -\frac{\e^{2+2\delta}}{2}\overline{\psi^\e}D^\e D^\e_tD^\e_t \psi^\e   +\frac{\e^2}{2} \overline{\psi^\e} \left(D^\e D^\e_1D^\e_1\psi^\e+D^\e D^\e_2D^\e_2\psi^\e\right)\\
		&\hspace{2.5cm}-\nabla\left( V'\left(|\psi^\e|^2\right)\right)|\psi^\e|^2-V'\left(|\psi^\e|^2\right)\overline{\psi^\e}D^\e\psi^\e =0.
	\end{aligned}
\end{align} 
To reorder the covariant derivatives, we use the identity:
\begin{align*}
	D^\e D^\e_t\psi^\e - D^\e_t D^\e \psi^\e = \frac{\i}{\e}\psi^\e(\e^{\delta}\pa_tA^\e - \nabla A^\e_0),
\end{align*}
which leads to

\begin{align*}
	D^\e D^\e_t D^\e_t \psi^\e
	&= D^\e_t D^\e_t D^\e \psi^\e + \frac{2\i}{\e}D^\e_t\psi^\e(\e^{\delta}\pa_tA^\e - \nabla A^\e_0) 
	+ \frac{\i}{\e}\psi^\e\pa_t(\e^{\delta}\pa_t A^\e - \nabla A^\e_0).
\end{align*}    
Similarly, the spatial covariant derivatives satisfy the following identity for $ j=1,2 $:
\begin{align*}
	D^\e D^\e_j D^\e_j \psi^\e &= D^\e_j D^\e D^\e_j \psi^\e + \frac{\i}{\e^{1-\delta}}D^\e_j\psi^\e(\pa_jA^\e - \nabla A^\e_j)\\
	&= D^\e_j D^\e_j D^\e \psi^\e + \frac{2\i}{\e^{1-\delta}}D^\e_j\psi^\e(\pa_jA^\e - \nabla A^\e_j) 
	+ \frac{\i}{\e^{1-\delta}}\psi^\e\pa_j(\pa_jA^\e - \nabla A^\e_j).
\end{align*}
Using these identities, we take the real part of \eqref{A.2} and express each term as follows:
	\begin{align*}
		I_1&:= -\e\Im (\overline{\psi^\e} D^\e D^\e_t \psi^\e)= -\e\Im (\overline{\psi^\e}D^\e_tD^\e\psi^\e) - |\psi^\e|^2(\e^{\delta}\pa_t A^\e-\nabla A_0^\e)\\
		&=-\e\pa_t \Im(\overline{\psi^\e} D^\e \psi^\e) + \e\Im(\overline{D_t^\e\psi^\e} D^\e \psi^\e) - |\psi^\e|^2(\e^{\delta}\pa_t A^\e-\nabla A_0^\e),\\
					I_2&:=-\frac{\e^{2+2\delta}}{2}\Re(\overline{\psi^\e}D^\e D^\e_tD^\e_t \psi^\e)\\
		&=-\frac{\e^{2+2\delta}}{4}\pa_t\pa_t\nabla|\psi^\e|^2 + \e^{2+2\delta}\pa_t\Re(\overline{D^\e_t\psi^\e} D^\e \psi^\e)-\frac{\e^{2+2\delta}}{2} \Re(\overline{D_t^\e D_t^\e\psi^\e} D^\e \psi^\e )\\
		&\quad+ \e^{1+2\delta}\Im(\overline{\psi^\e}D_t^{\e}\psi^\e)(\e^{\delta}\pa_t A^\e-\nabla A^\e_0) ,\\
			I_3&:=\frac{\e^2}{2} \Re\left(\overline{\psi^\e}( D^\e D^\e_1D^\e_1\psi^\e+D^\e D^\e_2D^\e_2\psi^\e)\right)\\
			&=\frac{\e^2}{2}\Delta\Re\left(\overline{\psi^\e} D^\e \psi^\e \right) -\frac{\e^2}{2} \nabla\cdot \left(D^\e\psi^\e\otimes \overline{D^\e\psi^\e}+\overline{D^\e\psi^\e}\otimes D^\e\psi^\e\right)\\
			&\quad	+\frac{\e^2}{2}\Re(\overline{D^\e_1D^\e_1\psi^\e} D^\e \psi^\e+
			\overline{D^\e_2D^\e_2\psi^\e} D^\e \psi^\e )
	-\e^{1+\delta}(\pa_1A_2^\e-\pa_2 A^\e_1)\Im(\overline{\psi^\e}D^\e\psi^\e)^{\perp},\\
						I_4&:= -\nabla \left(V'\left(|\psi^\e|^2\right)\right)|\psi^\e|^2-V'\left(|\psi^\e|^2\right)\Re(\overline{\psi^\e}D^\e\psi^\e) \\
			&=-(\gamma-1)\nabla V\left(|\psi^\e|^2\right)-V'\left(|\psi^\e|^2\right)\Re(\overline{\psi^\e}D^\e\psi^\e).
	\end{align*}
Focusing on the third term, which requires special attention (the others are straightforward, and the second term can be derived using the same method), we analyze it step by step as follows:
\begin{align*}
	\begin{aligned}
		I_3&=\frac{\e^2}{2} \Re\left(\overline{\psi^\e} \left(D^\e D^\e_1D^\e_1\psi^\e+D^\e D^\e_2D^\e_2\psi^\e\right)\right)\\
		&=\frac{\e^2}{2}\sum_{j=1}^2\Re(\overline{\psi^\e}D^\e_j D^\e_j D^\e)-\e^{1+\delta}\sum_{j=1}^2\Im(\overline{\psi^\e}D^\e_j\psi^\e)(\pa_jA^\e-\nabla A^\e_j)\\        
		&=\frac{\e^2}{2} \sum_{j=1}^{2} \left(\pa_j \Re(\overline{\psi^\e} D^{\e}_j D^\e \psi^\e)
		-\Re(\overline{D^\e_j\psi^\e}D^\e_j D^\e \psi^\e)\right)-\e^{1+\delta}(\pa_1A_2^\e-\pa_2 A^\e_1)\Im(\overline{\psi^\e}D^\e\psi^\e)^{\perp}\\  
		&=\frac{\e^2}{2} \sum_{j=1}^{2} \left(\pa_j \pa_j \Re(\overline{\psi^\e} D^\e \psi^\e)
		-2\pa_j\Re(\overline{D^\e_j\psi^\e} D^\e \psi^\e )+\Re(\overline{D^\e_jD^\e_j\psi^\e} D^\e \psi^\e)\right)\\
		&\quad-\e^{1+\delta}(\pa_1A_2^\e-\pa_2 A^\e_1)\Im(\overline{\psi^\e}D^\e\psi^\e)^{\perp}\\
		&=\frac{\e^2}{4}\Delta\nabla|\psi^\e|^2-\frac{\e^2}{2} \nabla\cdot \left(D^\e\psi^\e\otimes \overline{D^\e\psi^\e}+\overline{D^\e\psi^\e}\otimes D^\e\psi^\e\right)\\
		&\quad+\frac{\e^2}{4}\Re\left((\overline{D^\e_1D^\e_1\psi^\e} +
		\overline{D^\e_2D^\e_2\psi^\e}) D^\e \psi^\e \right)-\e^{1+\delta}(\pa_1A_2^\e-\pa_2 A^\e_1)\Im(\overline{\psi^\e}D^\e\psi^\e)^{\perp}.
	\end{aligned}    
\end{align*}

Now, summing $ I_{\ell}$ for $ \ell=1,\dots,4 $, some terms cancel out due to $\eqref{A.2.1}$, and $\eqref{A.2}$ simplifies to:
\begin{align*}
	&-\e\pa_t \Im\left(\overline{\psi^\e} D^\e \psi^\e\right)-|\psi^\e|^2(\e^{\delta}\pa_t A^\e-\nabla A_0^\e)-\frac{\e^{2+2\delta}}{4}\pa_t\pa_t\nabla|\psi^\e|^2 + \e^{2+2\delta}\pa_t\Re\left(\overline{D^\e_t\psi^\e} D^\e \psi^\e\right) \\
	& + \e^{1+2\delta}\Im(\overline{\psi^\e}D_t^{\e}\psi^\e)(\e^{\delta}\pa_t A^\e-\nabla A^\e_0)  + \frac{\e^2}{4}\Delta\nabla|\psi^\e|^2 -
	\frac{\e^2}{2} \nabla\cdot \left(D^\e\psi^\e\otimes \overline{D^\e\psi^\e}+\overline{D^\e\psi^\e}\otimes D^\e\psi^\e\right)\\
	&-\e^{1+\delta}(\pa_1A_2^\e-\pa_2 A^\e_1)\Im(\overline{\psi^\e}D^\e\psi^\e)^{\perp}
	-(\gamma-1)\nabla V\left(|\psi^\e|^2\right) = 0.
\end{align*}
From $\eqref{A.1}_{2,3}$, the terms involving gauge fields cancel out:
\begin{align*}
	\big(-|\psi^\e|^2+\e^{1+2\delta}\Im(\overline{\psi^\e}D_t^{\e}\psi^\e)\big)(\e^{\delta}\pa_t A^\e-\nabla A_0^\e)  -\e^{\delta}(\pa_1A_2^\e-\pa_2 A^\e_1)\e\Im(\overline{\psi^\e}D^\e\psi^\e)^{\perp} = 0.
\end{align*}
Thus, we arrive at the conservation law for momentum:
\begin{align*}
	&\e\pa_t \Im\left(\overline{\psi^\e} D^\e \psi^\e\right)- \e^{2+2\delta}\pa_t\Re\left(\overline{D^\e_t\psi^\e} D^\e \psi^\e\right) +\frac{\e^2}{2} \nabla \cdot\left(D^\e\psi^\e\otimes \overline{D^\e\psi^\e}+\overline{D^\e\psi^\e}\otimes D^\e\psi^\e\right)\\
	&\quad -\frac{\e^2}{4}\Delta\nabla|\psi^\e|^2
	+(\gamma-1)\nabla V\left(|\psi^\e|^2\right)+\frac{\e^{2+2\delta}}{4}\pa_t\pa_t\nabla|\psi^\e|^2 =0.
\end{align*}

\vspace{0.2cm}
\noindent $\bullet$ (Total energy conservation): Multiplying $\eqref{A.1}_1$ by $ \overline{D^\e_t\psi^\e} $ and taking the real part, we obtain
\begin{align}\label{est-energy}
	\frac{\e^{2+2\delta}}{2}\Re\left(D_t^\e D_t^\e\psi^\e \overline{D_t^\e\psi^\e}\right) - \frac{\e^2}{2}\Re\left((D^\e_1D^\e_1\psi^\e+D^\e_2D^\e_2\psi^\e) \overline{D^\e_t\psi^\e}\right) + V'\left(|\psi^\e|^2\right)\Re(\psi^\e\overline{D^\e_t\psi^\e})=0.
\end{align}
Using previously derived identities, we rewrite each term as: for $j=1,2$,
\begin{align*}
	\Re(D_t^\e D_t^\e \psi^\e\overline{D_t^\e\psi^\e}) &= \frac{1}{2}\partial_t|D_t^\e\psi^\e|^2,\quad
	D_j^\e D_j^\e \psi^\e\overline{D^\e_t\psi^\e} = \pa_j\left(D^\e_j\psi^\e\overline{D^\e_t\psi^\e}\right) - \overline{D^\e_j D^\e_t\psi^\e} D^\e_j \psi^\e.
\end{align*}
The term $\overline{D^\e_j D^\e_t\psi^\e} D^\e_j \psi^\e$ was previously handled in the momentum conservation calculation, so applying the same argument, we obtain
\begin{align*}
	\Re(\overline{D^\e_j D^\e_t\psi^\e}D^\e_j \psi^\e) &= \frac{1}{2}\pa_t |D^{\e}_j \psi^\e|^2 - \e^{-1} (\e^{\delta}\pa_t A^\e_j - \pa_jA^\e_0)\Im\left(\overline{\psi^\e}D_j^\e\psi^\e\right).
\end{align*}
Substituting these into \eqref{est-energy}, we obtain
\begin{align*}
	\frac{\e^{2+2\delta}}{4}\pa_t|D^\e_t \psi^\e|^2
	+ \frac12\pa_t V\left(|\psi^\e|^2\right)
	+ \frac{\e^2}{4}\pa_t|D^\e\psi^\e|^2
	&= \frac{\e^2}{2}\nabla \cdot \Re\left(\overline{ D^\e_t\psi^\e}D^\e\psi^\e\right)\\
	&\quad + \frac{\e^2}{2}\Im(\overline{\psi^\e}(D^\e\psi^\e)^\perp)\cdot
	\Im(\overline{\psi^\e}D^\e\psi^\e).
\end{align*}
Since $v^\perp\cdot v=0$ for any $v\in\mathbb{R}^2$, the last term vanishes.
Hence we obtain a local energy balance, and integrating over $\mathbb{R}^2$
yields the conservation of total energy.
\end{proof}


\begin{thebibliography}{10}
		

 \bibitem{AC07} T. Alazard and R. Carles, \textit{Semi-classical limit of Schr\"odinger--Poisson equations in space dimension $n\ge3$}, J. Differential Equations {\bf 233} (2007), 241--275.
 
  \bibitem{BMP01} P. Bechouche, N. J. Mauser, and F. Poupaud, \textit{Semiclassical limit for the Schr\"odinger--Poisson equation in a crystal}, Comm. Pure Appl. Math. {\bf 54} (2001), 852--890.

\bibitem{BGL00} C. Bardos, F. Golse, and C. D. Levermore, \textit{The acoustic limit for the Boltzmann equation}, Arch. Rational Mech. Anal. {\bf 153} (2000), 177--204.

\bibitem{BMS04} P. Bechouche, N. J. Mauser, and S. Selberg, \textit{Nonrelativistic limit of Klein--Gordon--Maxwell to Schr\"odinger--Poisson}, Amer. J. Math. {\bf 126} (2004), 31--64.



 \bibitem{B09} N. Bournaveas, \textit{Low regularity solutions of the relativistic Chern--Simons--Higgs theory in the Lorenz gauge}, Electron. J. Differential Equations (2009), 1--10.

\bibitem{B00} Y. Brenier, \textit{Convergence of the Vlasov--Poisson system to the incompressible Euler equations}, Comm. Partial Differential Equations {\bf 25} (2000), 737--754.

\bibitem{CC02} D. Chae and K. Choe, \textit{Global existence in the Cauchy problem of the relativistic Chern--Simons--Higgs theory}, Nonlinearity {\bf 15} (2002), 747--758.

\bibitem{CH09} M. Chae and H. Huh, \textit{Semi-nonrelativistic limit of the Chern--Simons--Higgs system}, J. Math. Phys. {\bf 50} (2009), 072303.

 \bibitem{CY95} L. A. Caffarelli and Y. Yang, \textit{Vortex condensation in the Chern--Simons--Higgs model: An existence theorem}, Commun. Math. Phys. {\bf 168} (1995), 321--336.

\bibitem{CJ21} Y.-P. Choi and J. Jung, \textit{Asymptotic analysis for a Vlasov--Fokker--Planck/Navier--Stokes system in a bounded 
	domain}, Math. Models Methods Appl. Sci. {\bf 31} (2021), 2213--2295.
	
\bibitem{D95} G. V. Dunne, \textit{Self-dual Chern–Simons theories}, Springer, 1995.

 \bibitem{EHI92} Z. F. Ezawa, M. Hotta, and A. Iwasaki, \textit{Anyon field theory and fractional quantum Hall statics}, Prog. Theor. Phys. Suppl. {\bf 107} (1992), 185--194.

\bibitem{HS07} J. Han and K. Song, \textit{Nonrelativistic limit in the self-dual abelian Chern--Simons model}, J. Korean Math. Soc.
{\bf 44} (2007), 997--1012.

\bibitem{HKP90} J. Hong, Y. Kim, and P. Y. Pac, \textit{Multivortex solutions of the Abelian Chern--Simons--Higgs theory}, Phys. Rev. Lett. {\bf 64} (1990), 2230--2233.

\bibitem{H07} H. Huh, \textit{Local and global solutions of the Chern--Simons--Higgs system}, J. Funct. Anal. {\bf 242} (2007), 526--549.

\bibitem{H11} H. Huh, \textit{Towards the Chern--Simons--Higgs equation with finite energy}, Discrete Contin. Dyn. Syst. {\bf 30} (2011), 1145--1159.

\bibitem{HM26} H. Huh and B. Moon, \textit{Understanding the non-relativistic behavior of the Chern--Simons--Higgs system}, J. Nonlinear Math. Phys., {\bf 33} (2026), Article No. 22.

\bibitem{HO16} H. Huh and T. Oh, \textit{Low regularity solutions to the Chern--Simons--Dirac and the Chern--Simons--Higgs equations in the Lorenz gauge}, Commun. PDE {\bf 41} (2016), 375--397.


\bibitem{JP90} R. Jackiw and S.-Y. Pi, \textit{Classical and quantal nonrelativistic Chern--Simons theory}, Phys. Rev. D {\bf 42} (1990), 3500--3513.



\bibitem{JW90} R. Jackiw and E. J. Weinberg, \textit{Self-dual Chern--Simons vortices}, Phys. Rev. Lett. {\bf 64} (1990), 2234--2237.

 
\bibitem{JMS11} S. Jin, P. Markowich, and C. Sparber, \textit{Mathematical and computational methods for semiclassical Schr\"odinger equations}, Acta Numer. {\bf 20} (2011), 121--209.

 \bibitem{JS21} S. Jin and J. Seok, \textit{Nonrelativistic limit of solitary waves for nonlinear Maxwell--Klein--Gordon equations}, Calc. Var. Partial Differential Equations {\bf 60} (2021), Paper No. 168.



 \bibitem{JW03} A. J\"ungel and S. Wang, \textit{Convergence of nonlinear Schr\"odinger--Poisson systems to the compressible Euler equations}, Comm. Partial Differential Equations {\bf 28} (2003), 1005--1022.
 


\bibitem{KM22} J. Kim and B. Moon, \textit{Hydrodynamic limits of the nonlinear Schr\"odinger equation with the Chern--Simons gauge fields}, Discrete Contin. Dyn. Syst. {\bf 42} (2022), 2541--2561.

\bibitem{KM23a} J. Kim and B. Moon, \textit{Hydrodynamic limits of Manton's Schr\"odinger system}, Commun. Pure Appl. Anal. {\bf 22} (2023), 2278--2297.

\bibitem{KM24}  J. Kim and B. Moon, \textsl{Hydrodynamic limit of the Maxwell--Schrödinger equations to the compressible Euler--Maxwell equations}, J. Differential Equations, {\bf 397} (2024), 34--54. 


 \bibitem{KM25} J. Kim and B. Moon, \textsl{Quantified asymptotic analysis for the relativistic quantum mechanical system with electromagnetic fields}, J. Math. Anal. Appl., {\bf 543} (2025), Paper No. 125800, 28 pp.	

\bibitem{KM23b} J. Kim and B. Moon, \textit{Quantified hydrodynamic limits for Schr\"odinger-type equations without the nonlinear potential}, J. Evol. Equ. {\bf 23} (2023), Paper No. 51, 27 pp.

\bibitem{LL03} H. Li and C.-K. Lin, \textit{Semiclassical limit and well-posedness of nonlinear Schr\"odinger--Poisson systems}, Electron. J. Differ. Equ. {\bf 2003} (2003), 1--17.

\bibitem{L98} P.-L. Lions, \textit{Mathematical topics in fluid mechanics}, Oxford University Press, 1998.

\bibitem{LM98} P.-L. Lions and N. Masmoudi, \textit{Incompressible limit for a viscous compressible fluid}, J. Math. Pures Appl. {\bf 77} (1998), 585--627.

\bibitem{LW12} C.-K. Lin and K.-C. Wu, \textit{Hydrodynamic limits of the nonlinear Klein--Gordon equation}, J. Math. Pures Appl. {\bf 98} (2012), 328--345.

\bibitem{M84} A. Majda, \textit{Compressible fluid flow and systems of conservation laws in several space variables}, Springer-Verlag, 1984.

\bibitem{M27} E. Madelung, \textit{Quantentheorie in hydrodynamischer Form}, Z. Physik {\bf 40} (1927), 322--326.

\bibitem{MN03} N. Masmoudi and K. Nakanishi, \textit{Nonrelativistic limit from Maxwell--Klein--Gordon and Maxwell--Dirac to Poisson--Schr\"odinger}, Int. Math. Res. Not. {\bf 2003} (2003), 697--734.

\bibitem{MV08} A. Mellet and A. Vasseur, \textit{Asymptotic analysis for a Vlasov--Fokker--Planck/compressible Navier--Stokes system}, Commun. Math. Phys. {\bf 281} (2008), 573--596.


 \bibitem{P02} M. Puel, \textit{Convergence of the Schr\"odinger--Poisson system to the incompressible Euler equations}, Comm. Partial Differential Equations {\bf 27} (2002), 2311--2331.


\bibitem{S09} L. Saint-Raymond, \textit{Hydrodynamic limits: some improvements of the relative entropy methods}, Ann. Inst. H. Poincar\'e Anal. Non Lin\'eaire {\bf 26} (2009), 705--744.


\bibitem{S79} A. Y. Schoene, \textit{On the nonrelativistic limits of the Klein--Gordon and Dirac equations}, J. Math. Anal. Appl. {\bf 71} (1979), 36--47.


\bibitem{ST13} S. Selberg and A. Tesfahun, \textit{Global well-posedness of the Chern--Simons--Higgs equations with finite energy}, Discrete Contin. Dyn. Syst. {\bf 33} (2013), 2531--2546.



 \bibitem{Y01} Y. Yang, \textit{Solitons in Field Theory and Nonlinear Analysis}, Springer Monographs in Mathematics, Springer, 2001.


 \bibitem{Y11} J. Yuan, \textit{Local well-posedness of Chern--Simons--Higgs system in the Lorenz gauge}, J. Math. Phys. {\bf 52} (2011), 103706.
 

 



		
		
	\end{thebibliography}
\end{document}